
\documentclass[11pt,a4paper]{article}%
\usepackage{amsmath, amsfonts, amsthm, color,latexsym}
\usepackage{amscd}%
\usepackage{amsmath}%
\setcounter{MaxMatrixCols}{30}%
\usepackage{amsfonts}%
\usepackage{amssymb}%
\usepackage{graphicx}
\providecommand{\U}[1]{\protect\rule{.1in}{.1in}}
\newtheorem{theorem}{Theorem}[section]
\newtheorem{proposition}[theorem]{Proposition}
\newtheorem{corollary}[theorem]{Corollary}
\newtheorem{example}[theorem]{Example}
\newtheorem{examples}[theorem]{Examples}
\newtheorem{remark}[theorem]{Remark}

\newtheorem{final remark}[theorem]{Final Remark}
\newtheorem{definition}[theorem]{Definition}
\setlength{\textwidth}{13.5cm} \setlength{\textheight}{21.5cm}
\setlength{\topmargin}{-.5cm}

\begin{document}

\title{\textsc{{On the representation of multi-ideals by
tensor norms}}}
\author{Geraldo Botelho\thanks{Supported by CNPq Grant 306981/2008-4.}~, Erhan \c Caliskan and Daniel Pellegrino\thanks{Supported by INCT-Matem\'{a}tica, CNPq Grants 620108/2008-8 (Ed. Casadinho), 471686/2008-5 (Ed. Universal), 308084/2006-3 and PROCAD-NF Capes.\newline
2000 Mathematics Subject Classification: 46G25, 46B28.}}
\date{}
\maketitle
\begin{abstract} A tensor norm $\beta= (\beta_{n})_{n=1}^{\infty}$ is smooth if the natural correspondence
$(E_{1} \otimes\cdots\otimes E_{n} \otimes\mathbb{K},\beta_{n+1}) \longleftrightarrow
(E_{1} \otimes\cdots\otimes E_{n} ,\beta_{n})$ is always an isometric isomorphism. In this paper we study the representation of multi-ideals and of ideals of multilinear forms by smooth tensor norms.
\end{abstract}

\section{Introduction and notation}
The idea of describing the dual of a topological tensor product by means of a special class of bilinear mappings goes back to Grothendieck's celebrated R\'esum\'e \cite{resume}. For example, in his seminal work Grothendieck showed that to linear functionals on the injective tensor product correspond integral bilinear forms. With the emergence of the theory of ideals of multilinear mappings (multi-ideals) between Banach spaces, several instances of this kind of correspondence have appeared. For example, Matos in \cite{complutense} constructs a tensor norm such that to linear operators on the tensor product which are continuous with respect to this norm correspond exactly the class of nuclear multilinear mappings.\\
\indent In the theory of multi-ideals, the possibility of moving smoothly from spaces of $(n+1)$-linear mappings down to spaces of $n$-linear mappings turned out to be an important property (see, e.g., \cite{indagationes, studia, muro}). In this note we study
tensor norms in which this transition is smooth, as well as multi-ideals that correspond to such smooth tensor norms. To be more precise we need some definitions:

\begin{definition}\rm An {\it $n$-tensor norm} $\beta_{n}$ assigns to every $n$-tuple of normed spaces
$E_{1},\ldots,E_{n}$ a reasonable crossnorm $\beta_{n}(\cdot)$ on the full
$n$-fold tensor product $E_{1}\otimes\cdots\otimes E_{n}$ which satisfies the
metric mapping property. The resulting normed space is denoted by
$(E_{1}\otimes\cdots\otimes E_{n},\beta_{n})$. A {\it tensor norm} is a sequence
$\beta=(\beta_{n})_{n=1}^{\infty}$ where each $\beta_{n}$ is an $n$-tensor
norm.
\end{definition}

Let $E,E_1, \ldots, E_n,F$ be (real or complex) Banach spaces. By ${\cal L}(E;F)$ we denote the space of bounded linear operators from $E$ to $F$ endowed with the usual operator norm. When $F$ is the scalar field we simply write $E'$. By ${\cal L}(E_1, \ldots, E_n;F)$ we mean the space of continuous $n$-linear mappings from $E_1 \times \cdots \times E_n$ to $F$ endowed with the usual sup norm. When $E_1 = \cdots = E_n = E$ we write ${\cal L}(^nE;F)$.

\begin{definition}\rm \label{Definition 1.3} \rm An {\it ideal of multilinear mappings} (or {\it multi-ideal}) $\cal M$ is a
subclass of the class of all continuous
 multilinear mappings between Banach spaces such that for a positive integer $n$, Banach spaces $E_1, \ldots, E_n$ and $F$, the components ${\cal M}
 (E_1, \ldots, E_n;F) := {\cal L}(E_1, \ldots, E_n;F) \cap {\cal M}$ satisfy:
 \\
\noindent (i) ${\cal M}(E_1, \ldots, E_n;F)$ is a linear subspace of
${\cal L}(E_1, \ldots, E_n;F)$ which contains the $n$-linear mappings of finite type.\\
(ii) The ideal property: if $A \in {\cal M}(E_1, \ldots, E_n;F)$,
$u_j \in {\cal L}(G_j;E_j)$ for $j = 1, \ldots, n$ and $t \in
{\cal L}(F;H)$, then $t \circ A \circ (u_1, \ldots, u_n)$ is in
${\cal M}(G_1, \ldots, G_n;H)$.

\medskip

\noindent Moreover, there is a function $\|\cdot\|_{\cal M} \colon {\cal
M} \longrightarrow \mathbb{R}^+$ satisfying

\medskip

\noindent (i') $\|\cdot\|_{\cal M}$ restricted to ${\cal M}(E_1, \ldots,
E_n;F)$ is a norm for all
 Banach spaces $E_1, \ldots, E_n$ and $F$, which makes ${\cal M}(E_1, \ldots,
E_n;F)$ a Banach space. \\
\noindent (ii') $\|A \colon \mathbb{K}^n \longrightarrow \mathbb{K} : A(\lambda_1,\ldots,
 \lambda_n)
 = \lambda_1 \cdots \lambda_n \|_{\cal M} = 1$ for all $n$,\\
\noindent (iii') If $A \in {\cal M}(E_1, \ldots, E_n;F)$, $u_j \in
{\cal L}(G_j;E_j)$ for $j = 1, \ldots, n$ and $t \in {\cal
L}(F;H)$, then $\|t \circ A \circ (u_1, \ldots, u_n)\|_{\cal M}
\leq \|t\| \|A\|_{\cal M} \|u_1\|\cdots \|u_n\|$.

Of course the Banach spaces considered in this definition are all over the same fixed scalar field $\mathbb{K} = \mathbb{R}$ or $\mathbb{C}$. We
define
\[
\mathcal{M}^{\mathbb{K}}:= \{\mathcal{M}(E_{1}, \ldots, E_{n}; \mathbb{K}): n
\in\mathbb{N} {\rm~and~} E_{1}, \ldots, E_{n} {\rm~are ~Banach~spaces}\},
\]
and say that $\mathcal{M}^{\mathbb{K}}$ is an {\it ideal of multilinear forms}.
\end{definition}

\begin{definition}\rm
\label{definition}We say that a tensor norm $\beta= (\beta_{n}%
)_{n=1}^{\infty}$ {\it represents} the multi-ideal $\mathcal{M}$ - or
$\mathcal{M}$ is $\beta$-represented - if $\mathcal{M}(E_{1}, \ldots,
E_{n};F^{\prime})$ is {isometrically} isomorphic to $(E_{1}
\otimes\cdots\otimes E_{n}\otimes F, \beta_{n+1})^{\prime}$ through
the canonical mapping
\[
\varphi\colon\mathcal{M}(E_{1}, \ldots, E_{n};F^{\prime}) \longrightarrow
(E_{1} \otimes\cdots\otimes E_{n}\otimes F, \beta_{n+1})^{\prime}%
\]
\[
T \mapsto\varphi(T)(x_{1} \otimes\cdots\otimes x_{n} \otimes y) = T(x_{1},
\ldots, x_{n})(y),
\]
for every $n$ and every Banach spaces $E_{1}, \ldots, E_{n}, F$. \\
\indent The ideal of multilinear forms $\mathcal{M}^{\mathbb{K}}$ is {\it represented} by $\beta$ - or $\mathcal{M}^{\mathbb{K}%
}$ is $\beta$-represented - if the condition above holds for all components $\mathcal{M}(E_{1},
\ldots, E_{n}; \mathbb{K})$ of $\mathcal{M}^{\mathbb{K}}$.
\end{definition}

\indent In \cite[Theorem 4.5]{hf} it is proved that a multi-ideal
$\mathcal{M}$ is maximal if and only if $\mathcal{M}$ is represented by some
(finitely generated) tensor norm. It is well known (see, e.g., \cite[Exercise
12.1]{defantfloret}) that for every normed space $E$ and every 2-tensor norm
$\alpha$, $(E \otimes\mathbb{K}, \alpha)$ is isometrically isomorphic to $E$
via the correspondence $x \otimes\lambda\longleftrightarrow\lambda x$. Given a
tensor norm $\beta= (\beta_{n})_{n=1}^{\infty}$, this property can be
rewritten as $(E \otimes\mathbb{K}, \beta_{2}) = (E, \beta_{1})$ for every
$E$. As we will see along the paper, this property is no longer valid for larger $n$, that is, it is not always true that
$(E_1\otimes\cdots \otimes E_n, \beta_{n})$ is canonically isomorphic to
$(E_1\otimes\cdots \otimes E_n \otimes\mathbb{K}, \beta_{n+1}) $ for
every $n\geq2$. This phenomenon motivates the following definition:

\begin{definition}\rm
\label{def}\textrm{A tensor norm $\beta= (\beta_{n})_{n=1}^{\infty}$ is said
to be \textit{smooth} if, regardless of the natural $n$ and the normed spaces
$E_{1}, \ldots, E_{n}$, the natural map
\[
\psi\colon(E_{1} \otimes\cdots\otimes E_{n} \otimes\mathbb{K},\beta_{n+1})
\longrightarrow(E_{1} \otimes\cdots\otimes E_{n} ,\beta_{n}),
\]
\[
\psi(x_{1} \otimes\cdots\otimes x_{n} \otimes\lambda) =\lambda(x_{1}
\otimes\cdots\otimes x_{n} )
\]
is an isometric isomorphism. }
\end{definition}

In this paper we are concerned with the representation of multi-ideals by smooth tensor norms. The conclusion of our results/examples is that: (i) multi-ideals are rarely represented by smooth tensor norms, (ii) ideals of multilinear forms are more often represented by smooth tensor norms, (iii) the representation of an ideal of multilinear forms by a smooth tensor norm yields the representation of some of its vector-valued components by the same smooth tensor norm.

\section{Vector-valued case}
The aim of this section is to show that multi-ideals, including their vector-valued components, are rarely represented by smooth tensor norms. We start with the two first obvious examples.

\begin{examples}\rm
\label{example}(a) The projective tensor norm $\pi$ is smooth: given
$z = \sum_{j=1}^{m} x_{j}^{(1)}\otimes\cdots\otimes x_{j}^{(n)}\otimes
\lambda_{j} \in E_{1}\otimes\cdots\otimes E_{n}\otimes\mathbb{K}$, letting
$\bar z = \sum_{j=1}^{m} \lambda_{j}x_{j}^{(1)}\otimes\cdots\otimes
x_{j}^{(n)}\in E_{1}\otimes\cdots\otimes E_{n} $, it is easy to check that
$\pi_{n+1}(z) = \pi_{n}(\bar z)$. So $\psi\colon(E_{1} \otimes\cdots\otimes
E_{n} \otimes\mathbb{K},\pi_{n+1}) \longrightarrow(E_{1} \otimes\cdots\otimes
E_{n} ,\pi_{n})$ is an isometric isomorphism. So the multi-ideal $\mathcal{L}$
of all continuous multilinear mappings between Banach spaces, which is
obviously $\pi$-represented (see \cite[Proposition A.3.7]{dales}), is
represented by a smooth tensor norm. \\

\noindent (b) The injective tensor norm $\varepsilon$ is smooth.
Indeed, for $z$ and $\bar{z}$ as above,
\begin{align*}
\varepsilon_{n+1}(z)  & =\sup_{\varphi_{l}\in B_{E_{l}^{\prime}},\varphi\in
B_{\mathbb{K}^{\prime}}}\left\vert \sum_{j=1}^{m}\varphi_{1}(x_{j}%
^{(1)})\cdots\varphi_{n}(x_{j}^{(n)})\varphi(\lambda_{j})\right\vert \\
& =\sup_{\varphi_{l}\in B_{E_{l}^{\prime}},\varphi\in B_{\mathbb{K}^{\prime}}%
}\left\vert \sum_{j=1}^{m}\varphi_{1}(\lambda_{j}x_{j}^{(1)})\cdots\varphi
_{n}(x_{j}^{(n)})\varphi(1)\right\vert \\
& =\sup_{\varphi_{l}\in B_{E_{l}^{\prime}},\varphi\in B_{\mathbb{K}^{\prime}}%
}|\varphi(1)|\left\vert \sum_{j=1}^{m}\varphi_{1}(\lambda_{j}x_{j}%
^{(1)})\cdots\varphi_{n}(x_{j}^{(n)})\right\vert \\
& =\sup_{\varphi_{l}\in B_{E_{l}^{\prime}}}\left\vert \sum_{j=1}^{m}%
\varphi_{1}(\lambda_{j}x_{j}^{(1)})\cdots\varphi_{n}(x_{j}^{(n)})\right\vert
=\varepsilon_{n}(\widetilde{z}),
\end{align*}
proving that $\psi\colon(E_{1}\otimes\cdots\otimes E_{n}\otimes\mathbb{K}%
,\varepsilon_{n+1})\longrightarrow(E_{1}\otimes\cdots\otimes E_{n}%
,\varepsilon_{n})$ is an isometric isomorphism. It is known since Grothendieck's R\'esum\'e that the
multi-ideal $\mathcal{L}_{\mathcal{I}}$ of integral multilinear mappings is $\varepsilon$-represented (the scalar-valued bilinear case can be found in \cite[Theorem 1.1.21]{diestelresume}). We give the details for the sake of completeness. Denoting by
$\mathcal{I}(E;F)$ the space of integral linear operators from $E$ to $F$ and
by $E_{1}\widehat{\otimes}_{\varepsilon}\cdots\widehat{\otimes}_{\varepsilon
}E_{n}$ the completion of $(E_{1}\otimes\cdots\otimes E_{n},\varepsilon_{n})$
we have that
\[
\mathcal{L}_{\mathcal{I}}(E_{1},\ldots,E_{n};F^{\prime})\overset{(1)}%
{=}\mathcal{I}(E_{1}\widehat{\otimes}_{\varepsilon}\cdots\widehat{\otimes
}_{\varepsilon}E_{n};F^{\prime})\overset{(2)}{=}\mathcal{L}_{\mathcal{I}%
}(E_{1}\widehat{\otimes}_{\varepsilon}\cdots\widehat{\otimes}_{\varepsilon
}E_{n},F;\mathbb{K})
\]%
\[
\overset{(1)}{=}\mathcal{I}(E_{1}\widehat{\otimes}_{\varepsilon}\cdots
\widehat{\otimes}_{\varepsilon}E_{n}\widehat{\otimes}_{\varepsilon
}F;\mathbb{K})\overset{(3)}{=}(E_{1}\widehat{\otimes}_{\varepsilon}%
\cdots\widehat{\otimes}_{\varepsilon}E_{n}\widehat{\otimes}_{\varepsilon
}F)^{\prime}\overset{(4)}{=}(E_{1}\otimes\cdots\otimes E_{n}\otimes
F,\varepsilon_{n+1})^{\prime}%
\]
(1) See \cite[Proposition 2.2]{cdg}.\newline(2) See \cite[Proposition 3.22]{Ryan}.\newline(3) This is trivial
because $\mathcal{I}$ is an operator ideal.\newline(4) Since $E_{1}%
\widehat{\otimes}_{\varepsilon}\cdots\widehat{\otimes}_{\varepsilon}%
E_{n}\widehat{\otimes}_{\varepsilon}F$ is the completion of the normed space
$(E_{1}\otimes\cdots\otimes E_{n}\otimes F,\varepsilon_{n+1})$, we can
identify $(E_{1}\otimes\cdots\otimes E_{n}\otimes F,\varepsilon_{n+1}%
)^{\prime}$ with $(E_{1}\widehat{\otimes}_{\varepsilon}\cdots\widehat{\otimes
}_{\varepsilon}E_{n}\widehat{\otimes}_{\varepsilon}F)^{\prime}$ through the
well known isometry $\varphi\longrightarrow\overline{\varphi}$, where
$\overline{\varphi}$ is the unique extension of $\varphi$.\newline%
\indent Since all identifications above are done by the corresponding standard
mappings, their composition coincides with our mapping $\varphi$, which proves
that $\mathcal{L}_{\mathcal{I}}$ is $\varepsilon$-represented, hence
represented by a smooth tensor norm.
\end{examples}

\begin{definition}\rm Let a tensor norm
$\beta= (\beta_{n})_{n=1}^{\infty}$ be given. Define $\mathcal{L}_{\beta
}(E_{1}, \ldots, E_{n};F)$ as those multilinear mappings $A \in\mathcal{L}%
(E_{1}, \ldots, E_{n};F)$ whose linearizations
\[
A_{L} \colon(E_{1} \otimes\cdots\otimes E_{n}, \beta_{n}) \longrightarrow F
\]
are continuous endowed with the norm $$\|A\|_{{\cal L}_{\beta}} :=
\|A_L \colon (E_1 \otimes \cdots \otimes E_n, \beta_n)
\longrightarrow F \|. $$
\end{definition}
It is easy to see that
$\mathcal{L}_{\beta}$ is a multi-ideal. The question
of whether or not ${\cal L}_{\beta}$ is $\beta$-represented is quite
natural. We shall treat it later.\newline\indent Next we define a
property which is closely related to property (B) of \cite{studia}:

\begin{definition}\rm
\textrm{Given $A \in\mathcal{L}(E_{1}, \ldots, E_{n}, \mathbb{K};F)$, define
$A1 \in\mathcal{L}(E_{1}, \ldots, E_{n};F)$ by $A1(x_{1}, \ldots, x_{n}) =
A(x_{1}, \ldots, x_{n},1)$. We say that a multi-ideal $\mathcal{M}$ has
property [B] if
\[
A \in\mathcal{M}(E_{1}, \ldots, E_{n}, \mathbb{K};F) \Longleftrightarrow A1
\in\mathcal{M}(E_{1}, \ldots, E_{n};F)
\]
{and in this case $\|A\|_{\cal M} = \|A1\|_{\cal M}$}, for every
$n$, $E_{1}, \ldots, E_{n},F$ and $A \in\mathcal{L}(E_{1}, \ldots,
E_{n};F)$. }
\end{definition}

\begin{proposition}
\label{prop} A tensor norm $\beta$ is smooth if and only if its corresponding
multi-ideal $\mathcal{L}_{\beta}$ has property [B].
\end{proposition}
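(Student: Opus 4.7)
The plan is to route both implications through the identity
\[
A_{L}\;=\;(A1)_{L}\circ\psi,
\]
valid for every $A\in\mathcal{L}(E_{1},\ldots,E_{n},\mathbb{K};F)$. Multilinearity gives $A(x_{1},\ldots,x_{n},\lambda)=\lambda\cdot A1(x_{1},\ldots,x_{n})$, so
\[
A_{L}(x_{1}\otimes\cdots\otimes x_{n}\otimes\lambda)=\lambda(A1)(x_{1},\ldots,x_{n})=(A1)_{L}\bigl(\psi(x_{1}\otimes\cdots\otimes x_{n}\otimes\lambda)\bigr),
\]
and the two linear maps agree everywhere by extending from elementary tensors. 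Note that $\psi$ is always an algebraic bijection (with inverse $x_{1}\otimes\cdots\otimes x_{n}\mapsto x_{1}\otimes\cdots\otimes x_{n}\otimes 1$), so smoothness amounts exactly to $\psi$ being isometric.

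The direction $(\Rightarrow)$ is then immediate: if $\psi$ is an isometric isomorphism, the identity above shows that $A_{L}$ is bounded if and only if $(A1)_{L}$ is bounded, and in that case $\|A_{L}\|=\|(A1)_{L}\|$. This is property [B] for $\mathcal{L}_{\beta}$.

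For the direction $(\Leftarrow)$ I plan to use duality. Since $\beta_{n}$ is a reasonable crossnorm, standard linearization yields an isometric identification $\mathcal{L}_{\beta}(E_{1},\ldots,E_{n};\mathbb{K})\cong(E_{1}\otimes\cdots\otimes E_{n},\beta_{n})'$ via $B\mapsto B_{L}$, and similarly at level $n+1$. By Hahn--Banach, for $z\in E_{1}\otimes\cdots\otimes E_{n}\otimes\mathbb{K}$,
\[
\beta_{n+1}(z)=\sup\bigl\{|A_{L}(z)|:A\in\mathcal{L}_{\beta}(E_{1},\ldots,E_{n},\mathbb{K};\mathbb{K}),\ \|A\|_{\mathcal{L}_{\beta}}\le 1\bigr\}.
\]
Property [B] tells me that $A\mapsto A1$ is an isometric bijection between the unit balls of $\mathcal{L}_{\beta}(E_{1},\ldots,E_{n},\mathbb{K};\mathbb{K})$ and $\mathcal{L}_{\beta}(E_{1},\ldots,E_{n};\mathbb{K})$; surjectivity comes from the assignment $B\mapsto\tilde B$ with $\tilde B(x_{1},\ldots,x_{n},\lambda)=\lambda B(x_{1},\ldots,x_{n})$, which satisfies $\tilde B1=B$ and belongs to $\mathcal{L}_{\beta}$ by [B]. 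Combining this with $A_{L}(z)=(A1)_{L}(\psi(z))$ rewrites the supremum as
\[
\sup\bigl\{|(A1)_{L}(\psi(z))|:\|A1\|_{\mathcal{L}_{\beta}}\le 1\bigr\}=\beta_{n}(\psi(z)),
\]
where the last equality is the dual representation of $\beta_{n}$ on $\psi(z)$. Therefore $\beta_{n+1}(z)=\beta_{n}(\psi(z))$, so $\psi$ is an isometry.

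The only delicate bookkeeping is checking that the correspondence $A\leftrightarrow A1$ is norm-preserving in \emph{both} directions, which is exactly what [B] is built to supply; beyond this, both implications collapse to the single linearization identity at the top.
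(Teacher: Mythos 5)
Your proof is correct. The forward implication coincides with the paper's: both rest on the identity $A_{L}=(A1)_{L}\circ\psi$ and the fact that composing with a surjective isometry preserves boundedness and operator norm. The converse is where you genuinely diverge. The paper argues \emph{constructively}: it exhibits two concrete vector-valued multilinear maps, $A(x_{1},\ldots,x_{n},\lambda)=\lambda(x_{1}\otimes\cdots\otimes x_{n})$ with values in $(E_{1}\otimes\cdots\otimes E_{n},\beta_{n})$ and $C(x_{1},\ldots,x_{n},\lambda)=x_{1}\otimes\cdots\otimes x_{n}\otimes\lambda$ with values in $(E_{1}\otimes\cdots\otimes E_{n}\otimes\mathbb{K},\beta_{n+1})$, observes that $A_{L}=\psi$ and $(C1)_{L}=\psi^{-1}$, and applies property [B] (in its vector-valued form) to each to get the two inequalities $\beta_{n}(\psi(z))\le\beta_{n+1}(z)\le\beta_{n}(\psi(z))$. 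You instead argue \emph{by duality}: Hahn--Banach identifies $\beta_{n+1}(z)$ and $\beta_{n}(\psi(z))$ as suprema over the unit balls of $\mathcal{L}_{\beta}(E_{1},\ldots,E_{n},\mathbb{K};\mathbb{K})$ and $\mathcal{L}_{\beta}(E_{1},\ldots,E_{n};\mathbb{K})$ respectively (these dual-ball descriptions are tautological from the definition of $\|\cdot\|_{\mathcal{L}_{\beta}}$), and property [B] for $F=\mathbb{K}$ makes $A\mapsto A1$ an isometric bijection between those unit balls, so the two suprema agree via $A_{L}(z)=(A1)_{L}(\psi(z))$. Your route buys a slightly sharper statement -- smoothness already follows from the scalar-valued instances of [B], i.e.\ from [B] for the ideal of forms $\mathcal{L}_{\beta}^{\mathbb{K}}$ -- and fits naturally with the scalar-valued theme of Section 3; the paper's route is more elementary in that it avoids Hahn--Banach and produces $\psi$ and $\psi^{-1}$ directly as linearizations of explicit maps. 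Your surjectivity step ($B\mapsto\tilde{B}$ with $\tilde{B}1=B$) is exactly the point where the "$\Leftarrow$" half of [B] is needed, and you handle it correctly.
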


\begin{proof} Assume that $\beta = (\beta_n)_{n=1}^\infty$ is a smooth tensor norm. Given
$A \in {\cal L}(E_1, \ldots, E_n,\mathbb{K};F)$, consider the chain
$$E_1 \otimes \cdots \otimes E_n \otimes \mathbb{K} \stackrel{\psi}{\longrightarrow} E_1 \otimes \cdots \otimes E_n
\stackrel{{\psi}^{-1}}{\longrightarrow} E_1 \otimes \cdots \otimes
E_n \otimes \mathbb{K} \stackrel{A_L}{\longrightarrow} F $$
It is
not difficult to see that $A_L = (A1)_L \circ \psi$ and $(A1)_L =
A_L \circ \psi^{-1}$. Since $\psi$ and $\psi^{-1}$ are continuous as
$\beta$ is smooth, it follows that $A_L$ is continuous if and only
if $(A1)_L$ is continuous, that is $A \in {\cal L}_\beta$ if and
only if $A_1 \in {\cal L}_\beta$. {In this case $\|A\|_{{\cal
L}_\beta} = \|A1\|_{{\cal L}_\beta}$ because $\psi$ and $\psi^{-1}$
are isometric
isomorphisms}, proving that ${\cal L}_\beta$ has property [B].\\
\indent Conversely, assume that ${\cal L}_\beta$ has property [B]. Given $E_1, \ldots, E_n$, consider $A
\colon E_1 \times \cdots \times E_n \times \mathbb{K} \longrightarrow (E_1 \otimes \cdots \otimes E_n,
\beta_n)$ defined by $A(x_1, \ldots, x_n,\lambda) = \lambda(x_1 \otimes \cdots \otimes x_n)$. Then
$$(A1)_L(x_1 \otimes \cdots \otimes x_n) = A1(x_1,
\ldots, x_n) = A(x_1, \ldots, x_n,1)= x_1 \otimes \cdots \otimes
x_n,$$ showing that $(A1)_L$ is the identity operator on $E_1
\otimes \cdots \otimes E_n$, hence {an isometric isomorphism} when
this space is endowed with $\beta_n$ on both sides. It follows that
$A1 \in {\cal L}_\beta(E_1, \ldots, E_n; (E_1 \otimes \cdots \otimes
E_n, \beta_n))$. So $A \in {\cal L}_\beta(E_1, \ldots, E_n,
\mathbb{K}; (E_1 \otimes \cdots \otimes E_n, \beta_n))$ {and
$$\|A\|_{{\cal L}_\beta} = \|A1\|_{{\cal L}_\beta} = \|(A1)_L \colon
(E_1 \otimes \cdots \otimes E_n, \beta_n) \longrightarrow (E_1
\otimes \cdots \otimes E_n, \beta_n) \| =1$$} as ${\cal L}_\beta$
has property [B]. Therefore
$$A_L \colon (E_1 \otimes \cdots \otimes E_n\otimes \mathbb{K},
\beta_{n+1}) \longrightarrow (E_1 \otimes \cdots \otimes E_n,
\beta_n) $$ is continuous. But $A_L = \psi$, so $\psi$ is
continuous and $$1 = \|A\|_{{\cal L}_\beta} = \|A_L \colon (E_1
\otimes \cdots \otimes E_n\otimes \mathbb{K}, \beta_{n+1})
\longrightarrow (E_1 \otimes \cdots \otimes E_n, \beta_n) \|. $$
Moreover, $\beta_n(\psi(z)) = \beta_n(A_L(z)) \leq \beta_{n+1}(z)$
for every $z \in E_1 \otimes \cdots \otimes E_n\otimes
\mathbb{K}$.\\
\indent Consider now $C \colon E_1 \times \cdots \times E_n \times
\mathbb{K} \longrightarrow (E_1 \otimes \cdots \otimes E_n\otimes
\mathbb{K}, \beta_{n+1})$ given by $C(x_1, \ldots, x_n, \lambda) =
x_1 \otimes \cdots \otimes x_n \otimes \lambda$. It is clear that
$C_L$ is the identity operator on $E_1 \otimes \cdots \otimes
E_n\otimes \mathbb{K}$, hence {an isometric isomorphism} when this
space is endowed with $\beta_{n+1}$ on both sides. Hence $C \in
{\cal L}_\beta(E_1, \ldots, E_n,\mathbb{K}; (E_1 \otimes \cdots
\otimes E_n\otimes \mathbb{K}, \beta_{n+1}))$. So $C1 \in {\cal
L}_\beta(E_1, \ldots, E_n; (E_1 \otimes \cdots \otimes E_n\otimes
\mathbb{K}, \beta_{n+1}))$ {and $$\|C1\|_{{\cal L}_\beta} =
\|C\|_{{\cal L}_\beta} = \|C_L \colon (E_1 \otimes \cdots \otimes
E_n\otimes \mathbb{K}, \beta_{n+1}) \longrightarrow (E_1 \otimes
\cdots \otimes E_n\otimes \mathbb{K}, \beta_{n+1}) \| =1$$} as
${\cal L}_\beta$ has property [B]. Therefore
$$(C1)_L \colon (E_1 \otimes \cdots \otimes E_n,
\beta_{n}) \longrightarrow (E_1 \otimes \cdots \otimes E_n\otimes
\mathbb{K}, \beta_{n+1}) $$ is continuous. But $(C1)_L = \psi^{-1}$,
so $\psi^{-1}$ is continuous  {and $$1 = \|C1\|_{{\cal L}_\beta} =
\|(C1)_L \colon (E_1 \otimes \cdots \otimes E_n, \beta_{n})
\longrightarrow (E_1 \otimes \cdots \otimes E_n\times \mathbb{K},
\beta_{n+1}) \|. $$ Moreover, $\beta_{n+1}(\psi^{-1}(w)) =
\beta_{n+1}((C1)_L(w)) \leq \beta_{n}(w)$ for every $w \in E_1
\otimes \cdots \otimes E_n$. Making $w = \psi(z)$ we obtain
$$\beta_n(\psi(z)) \leq \beta_{n+1}(z) \leq  \beta_n(\psi(z))$$
for every $z \in E_1 \otimes \cdots \otimes E_n\otimes \mathbb{K}$, proving that $\psi$ is an isometric
isomorphism, that is, $\beta$ is smooth.}
\end{proof}

Now we turn our attention to multi-ideals that can be represented by smooth
tensor norms.




\begin{theorem}
\label{theorem} {\rm(Uniqueness of the representation)}%
{ The tensor norm that represents a given multi-ideal, if any, is unique.}
\end{theorem}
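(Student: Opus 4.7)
The plan is to exploit the Hahn--Banach principle that any normed space is recovered from its continuous dual by the duality formula $\gamma(z)=\sup\{|\phi(z)|:\phi\in(V,\gamma)',\ \|\phi\|_{\gamma^{*}}\leq 1\}$. Suppose that $\mathcal{M}$ is represented by two tensor norms $\alpha=(\alpha_{n})_{n=1}^{\infty}$ and $\beta=(\beta_{n})_{n=1}^{\infty}$; the goal is to establish $\alpha_{n}(z)=\beta_{n}(z)$ for every $n\geq 1$, every choice of Banach spaces $E_{1},\ldots,E_{n}$, and every $z\in E_{1}\otimes\cdots\otimes E_{n}$.

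For $n=1$ the tensor product reduces to $E_{1}$ itself and a reasonable crossnorm on it is nothing but the original norm, so $\alpha_{1}=\beta_{1}$. For $n\geq 2$, I apply Definition~\ref{definition} with $n$ replaced by $n-1$ and $F:=E_{n}$. The key observation is that the same canonical mapping
$$\varphi\colon T\longmapsto\bigl(x_{1}\otimes\cdots\otimes x_{n-1}\otimes y\,\mapsto\,T(x_{1},\ldots,x_{n-1})(y)\bigr)$$
is used in both representations. It therefore provides two isometric isomorphisms
$$(E_{1}\otimes\cdots\otimes E_{n},\alpha_{n})'\,\stackrel{\varphi^{-1}}{\longleftarrow}\,\mathcal{M}(E_{1},\ldots,E_{n-1};E_{n}')\,\stackrel{\varphi}{\longrightarrow}\,(E_{1}\otimes\cdots\otimes E_{n},\beta_{n})',$$
and consequently the two dual Banach spaces coincide as subspaces of the algebraic dual of $E_{1}\otimes\cdots\otimes E_{n}$, with identical dual norms (both equal to the $\mathcal{M}$-norm transported through $\varphi^{-1}$).

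Finally, by Hahn--Banach, for every $z\in E_{1}\otimes\cdots\otimes E_{n}$,
$$\beta_{n}(z)=\sup_{\|\phi\|_{\beta_{n}^{*}}\leq 1}|\phi(z)|=\sup_{\|\phi\|_{\alpha_{n}^{*}}\leq 1}|\phi(z)|=\alpha_{n}(z).$$
The argument is essentially bookkeeping with duality; the only point requiring care is verifying that both representations are realized by literally the same map $\varphi$ (which is built into the definition), so that the identification of the duals is an honest equality and not merely an isometric isomorphism through some auxiliary correspondence.
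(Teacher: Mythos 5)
Your proof is correct and follows essentially the same route as the paper: both arguments observe that the two representations are realized by the \emph{same} canonical map $\varphi$, so the duals $(E_{1}\otimes\cdots\otimes E_{n},\alpha_{n})'$ and $(E_{1}\otimes\cdots\otimes E_{n},\beta_{n})'$ coincide isometrically via the formal identity, and then recover $\alpha_{n}=\beta_{n}$ by Hahn--Banach. Your separate remark on the $n=1$ case (which the paper's argument, requiring $n\geq 2$, silently omits) is a small but legitimate addition.
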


\begin{proof}{ Let $\cal M$ be a multi-ideal that is represented by the tensor norms
$\beta = (\beta_n)_{n=1}^\infty$ and $\gamma = (\gamma_n)_{n=1}^\infty$. Let $E_1, \ldots, E_{n-1}, E_n$ be
given. We have that the corresponding operators $\varphi_\beta \colon {\cal M}(E_1, \ldots,
E_{n-1};E_n^\prime) \longrightarrow (E_1 \otimes\cdots \otimes E_n,
\beta_n)^\prime$ and $\varphi_{\gamma} \colon {\cal M}(E_1, \ldots, E_{n-1};E_n^\prime) \longrightarrow
(E_1 \otimes\cdots \otimes
E_n,
\gamma_n)^\prime$ are isometric isomorphisms. So the composition $\varphi_\beta \circ (\varphi_{\gamma})^{-1}$,
which is clearly the formal identity, is an isometric isomorphism from $(E_1 \otimes\cdots \otimes
E_n,
\gamma_n)^\prime$ to $(E_1 \otimes\cdots \otimes
E_n,
\beta_n)^\prime$. By the Hahn-Banach theorem it follows that $ \gamma_n = \beta_n$ on $E_1
\otimes \cdots \otimes E_n$.}
\end{proof}


Given a maximal multi-ideal $\mathcal{M}$ (for the definition see
\cite{hf}), we have already mentioned that \cite[Theorem 4.5]{hf}
assures the existence of a tensor norm that represents
$\mathcal{M}$. Let us denote such tensor norm, {which is unique by
Theorem \ref{theorem}}, by $\beta ^{\mathcal{M}}$. Combining
Proposition \ref{prop} and {Theorem \ref{theorem}} we have:

\begin{proposition}
\label{max}%
{The following are equivalent for a maximal multi-ideal $\cal M$:\\
{\rm (a)} $\cal M$ is represented by a smooth tensor norm.\\
{\rm (b)} $\beta^{\cal M}$ is smooth.\\
{\rm (c)} ${\cal L}_{\beta^{\cal M}}$ has property [B].\\
In particular, if $\cal M$ has property [B] and
${\cal M} = {\cal
L}_{\beta^{\cal M}}$ then $\cal M$ is represented by a smooth tensor norm.}
\end{proposition}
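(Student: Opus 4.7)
The plan is to assemble the three equivalences by orbiting around the distinguished tensor norm $\beta^{\mathcal{M}}$, whose existence is granted by \cite[Theorem 4.5]{hf} and whose uniqueness (given that it represents $\mathcal{M}$) is granted by Theorem \ref{theorem}. Thus the equivalence machine is already largely built; one only needs to feed Proposition \ref{prop} with $\beta = \beta^{\mathcal{M}}$.

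First I would dispose of (a) $\Leftrightarrow$ (b). The implication (b) $\Rightarrow$ (a) is immediate, since $\mathcal{M}$ is by definition represented by $\beta^{\mathcal{M}}$, which we are assuming smooth. For (a) $\Rightarrow$ (b), suppose $\mathcal{M}$ is represented by some smooth tensor norm $\beta$; then Theorem \ref{theorem} forces $\beta = \beta^{\mathcal{M}}$, so $\beta^{\mathcal{M}}$ is itself smooth. Next, (b) $\Leftrightarrow$ (c) is nothing but Proposition \ref{prop} specialized to $\beta = \beta^{\mathcal{M}}$: $\beta^{\mathcal{M}}$ is smooth if and only if $\mathcal{L}_{\beta^{\mathcal{M}}}$ has property [B].

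For the final ``In particular'' clause, assume $\mathcal{M}$ has property [B] and that $\mathcal{M} = \mathcal{L}_{\beta^{\mathcal{M}}}$. Substituting the equality into the hypothesis yields that $\mathcal{L}_{\beta^{\mathcal{M}}}$ has property [B], which is condition (c); the already-proved chain (c) $\Rightarrow$ (b) $\Rightarrow$ (a) then delivers the conclusion that $\mathcal{M}$ is represented by a smooth tensor norm.

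There is no serious obstacle: the only subtlety worth flagging is that property [B] is stated for an abstract multi-ideal $\mathcal{M}$, whereas Proposition \ref{prop} phrases it for $\mathcal{L}_{\beta}$; the identification $\mathcal{M} = \mathcal{L}_{\beta^{\mathcal{M}}}$ in the hypothesis is precisely what bridges these two formulations, and one should make that transfer explicit in the write-up.
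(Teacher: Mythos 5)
Your proof is correct and is exactly the argument the paper intends: the paper gives no written-out proof, merely stating that the proposition follows by ``combining Proposition \ref{prop} and Theorem \ref{theorem}'', which is precisely the assembly you carry out (uniqueness for (a)$\Leftrightarrow$(b), Proposition \ref{prop} with $\beta=\beta^{\mathcal{M}}$ for (b)$\Leftrightarrow$(c), and the substitution $\mathcal{M}=\mathcal{L}_{\beta^{\mathcal{M}}}$ for the final clause).
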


This result impels us to study the equality $\mathcal{M} = \mathcal{L}%
_{\beta^{\mathcal{M}}}$. As to the projective norm, by Example
\ref{example}(a) and {Theorem \ref{theorem}} we know that
$\beta^{\mathcal{L}} = \pi$, so $\mathcal{L}_{\beta^{\mathcal{L}}} =
\mathcal{L}_{\pi}= \mathcal{L}$. We treat this question together with the question of whether or
not ${\cal L}_{\beta}$ is $\beta$-represented.

\begin{proposition}\label{injec}
{ The multi-ideal ${\cal L}_{\varepsilon}$ is not
$\varepsilon-$represented.}
\end{proposition}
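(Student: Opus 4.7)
My plan is to disprove $\varepsilon$-representability by exhibiting a failure at the very first level $n=1$. Since the $1$-tensor norm $\varepsilon_{1}$ on any Banach space can only be its original norm, we have $\mathcal{L}_{\varepsilon}(E;F') = \mathcal{L}(E;F')$ for every $E$ and $F$. If $\mathcal{L}_{\varepsilon}$ were $\varepsilon$-represented, then the canonical map
\[
\varphi \colon \mathcal{L}(E;F') \longrightarrow (E \otimes F, \varepsilon_{2})', \quad \varphi(T)(x \otimes y) = T(x)(y),
\]
would have to be a well-defined isometric isomorphism for every choice of $E$ and $F$.

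To block this, I take $E = F = \ell_{2}$ and let $T$ be the identity of $\ell_{2}$ (under the Riesz identification $\ell_{2} \cong \ell_{2}'$), so that $T \in \mathcal{L}_{\varepsilon}(\ell_{2};\ell_{2}')$. Then $\varphi(T)$ coincides on the algebraic tensor product $\ell_{2} \otimes \ell_{2}$ with the trace functional. Testing on the diagonal elements $z_{n} = \sum_{i=1}^{n} e_{i} \otimes e_{i}$, a brief Cauchy--Schwarz calculation yields $\varepsilon_{2}(z_{n}) = 1$ while $\varphi(T)(z_{n}) = n$. Hence $\varphi(T)$ is unbounded on the unit ball of $(\ell_{2} \otimes \ell_{2}, \varepsilon_{2})$ and so cannot belong to $(\ell_{2} \otimes \ell_{2}, \varepsilon_{2})'$; the canonical map $\varphi$ therefore does not even send $\mathcal{L}_{\varepsilon}(\ell_{2};\ell_{2}')$ into the required dual, contradicting $\varepsilon$-representability.

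I do not anticipate any serious obstacle, since the heart of the matter is the standard observation that $(E \otimes F, \varepsilon_{2})'$ sees only integral bilinear forms, whereas $\mathcal{L}_{\varepsilon}$ at level $n=1$ sees every bounded operator. A more conceptual variant uses Example \ref{example}(b) directly: $\mathcal{L}_{\mathcal{I}}$ is already $\varepsilon$-represented through the same canonical formula defining $\varphi$, so an $\varepsilon$-representation of $\mathcal{L}_{\varepsilon}$ would force $\mathcal{L}_{\varepsilon} = \mathcal{L}_{\mathcal{I}}$; the Hilbert space identity then supplies the contradiction, since it lies in $\mathcal{L}_{\varepsilon}$ but not in $\mathcal{L}_{\mathcal{I}}$ (integral operators on $\ell_{2}$ being exactly the trace-class operators).
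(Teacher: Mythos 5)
Your proof is correct, and it takes a more elementary route than the paper's. The paper also starts from the observation that an $\varepsilon$-representation of $\mathcal{L}_{\varepsilon}$, combined with the $\varepsilon$-representation of $\mathcal{L}_{\mathcal{I}}$ from Example \ref{example}(b), would force $\mathcal{L}_{\varepsilon}(E_1,\ldots,E_n;F')=\mathcal{L}_{\mathcal{I}}(E_1,\ldots,E_n;F')$ for all $n$ --- this is precisely your ``conceptual variant''. But where you contradict this already at level $n=1$ with the identity of $\ell_2$ (not integral, since integral operators between Hilbert spaces are trace class; or, in your first variant, by the bare computation $\varepsilon_2\bigl(\sum_{i\le n}e_i\otimes e_i\bigr)=1$ against trace $=n$), the paper works at level $n\ge 2$: it produces a non-integral operator $u\in\mathcal{L}(\widehat{\otimes}_{n,\varepsilon}c_0;F)$ for $F$ infinite-dimensional, via the identification $\widehat{\otimes}_{n,\varepsilon}c_0=c_0$, the fact that integral operators are absolutely summing, and the Lindenstrauss--Pe{\l}czy\'nski theorem, and then passes to the associated $n$-linear map $A(x_1,\ldots,x_n)=u(x_1\otimes\cdots\otimes x_n)$. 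Your argument is legitimate because Definition \ref{definition} demands the canonical map be an isometric isomorphism for every $n$, including $n=1$, where $\varepsilon_1$ is forced to be the original norm so that $\mathcal{L}_{\varepsilon}(E;F')=\mathcal{L}(E;F')$; your first variant is moreover entirely self-contained, needing no integral-operator theory. What the paper's heavier example buys is a genuinely multilinear ($n\ge 2$) witness, which it immediately reuses to record that $\mathcal{L}_{\beta^{\mathcal{L}_{\mathcal{I}}}}\neq\mathcal{L}_{\mathcal{I}}$.
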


\begin{proof} Assume for a while that ${\cal L}_{\varepsilon}$ is $\varepsilon-$represented. On the one hand,
by Example \ref{example}(b) we know that ${\cal L}_{\cal I}$ is $\varepsilon-$represented, so it follows
easily that ${\cal L}_{\varepsilon}(E_1, \ldots, E_n;F') = {\cal L}_{\cal I}(E_1, \ldots, E_n;F')$ for every
$n$ and $E_1, \ldots, E_n,F$. 
On the other hand, by
Example \ref{example}(b) and {Theorem \ref{theorem}} we know that
$\beta^{\mathcal{L}_{\mathcal{I}}} = \varepsilon$ and from
\cite[Proposition 2.2]{cdg} we have $\mathcal{L}_{\mathcal{I}}
\subseteq\mathcal{L}_{\varepsilon}$, hence
$\mathcal{L}_{\mathcal{I}}
\subseteq\mathcal{L}_{\varepsilon}= \mathcal{L}_{\beta^{\mathcal{L}%
_{\mathcal{I}}}}$. As to the converse inclusion, let $n \geq2$ and $F$ be an
infinite dimensional Banach space. Assume for a while that $\mathcal{L}%
(\hat\otimes_{n,\varepsilon} c_{0};F) = \mathcal{I}(\hat\otimes_{n,\varepsilon
} c_{0};F)$. As integral linear opertors are absolutely summing
\cite[Proposition 5.5]{djt}, we have that every continuous linear operator
from $\hat\otimes_{n,\varepsilon} c_{0}$ to $F$ is absolutely summing. We know
that $\hat\otimes_{n,\varepsilon} c_{0}$ has unconditional basis because
$c_{0} = \hat\otimes_{n,\varepsilon} c_{0}$, so by a result due to
Lindenstrauss-Pe{\l }czy\'nski \cite[Theorem 4.2]{lp} it follows that
$\hat\otimes_{n,\varepsilon} c_{0} = c_{0}$ is isomorphic to some $\ell
_{1}(\Gamma)$, but this is absurd. Therefore there exists a non-integral
operator $u \in\mathcal{L}(\hat\otimes_{n,\varepsilon} c_{0};F)$. Define
\[
A \colon c_{0} \times\cdots\times c_{0} \longrightarrow F~,~A(x_{1}, \ldots,
x_{n}) = u(x_{1} \otimes\cdots\otimes x_{n}).
\]
So $A_{L} = u$ is $\varepsilon$-continuous, hence $A \in\mathcal{L}%
_{\varepsilon}(^{n} c_{0};F)$, but $A$ fails to be integral because its
linearization $A_{L} = u$ fails to be integral in the injective norm
$\varepsilon$. Hence $\mathcal{L}_{\beta^{\mathcal{L}_{\mathcal{I}}}}(^{n}
c_{0};F) = \mathcal{L}_{\varepsilon}(^{n} c_{0};F) \neq
\mathcal{L}_{\mathcal{I}}(^{n} c_{0};F)$ - a contradiction.
\end{proof}

Later, in Proposition \ref{propos}, we shall go quite further. For the moment, the proof above shows, in particular, that $\mathcal{L}_{\beta^{\mathcal{L}_{\mathcal{I}}}}\neq
\mathcal{L}_{\mathcal{I}}$. Corollary \ref{coro} shall provide another example of the inequality $\mathcal{M}
\neq\mathcal{L}_{\beta^{\mathcal{M}}}$.



Now we proceed to present some multi-ideals that are represented by tensor norms but not by smooth tensor norms.
Before we give a general criteria:

\begin{proposition}
Let $\mathcal{M}$ be a multi-ideal such that $\mathcal{M}(E_{1},
\ldots, E_{n},F;\mathbb{K}) = \mathcal{L}(E_{1}, \ldots,
E_{n},F;\mathbb{K})$ and $\mathcal{M}(E_{1}, \ldots,
E_{n};F^{\prime}) \neq\mathcal{L}(E_{1}, \ldots, E_{n};F^{\prime}) $
for some Banach spaces $E_{1}, \ldots, E_{n}, F$ and some positive
integer $n$.
Then there is no smooth tensor norm that represents
$\mathcal{M}$.\label{p:111}
\end{proposition}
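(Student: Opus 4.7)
The plan is to argue by contradiction: suppose $\mathcal{M}$ is represented by some smooth tensor norm $\beta=(\beta_n)_{n=1}^\infty$, and derive that $\mathcal{M}(E_1,\ldots,E_n;F')=\mathcal{L}(E_1,\ldots,E_n;F')$, contradicting the hypothesis. First I would apply the definition of representation at two different ``arities'' to obtain two isometric isomorphisms:
$$\Phi_1\colon \mathcal{M}(E_1,\ldots,E_n;F')\longrightarrow (E_1\otimes\cdots\otimes E_n\otimes F,\beta_{n+1})',$$
and, using $\mathbb{K}'=\mathbb{K}$ to regard $\mathcal{M}(E_1,\ldots,E_n,F;\mathbb{K})$ as a component with dual-valued target,
$$\Phi_2\colon \mathcal{M}(E_1,\ldots,E_n,F;\mathbb{K})\longrightarrow (E_1\otimes\cdots\otimes E_n\otimes F\otimes\mathbb{K},\beta_{n+2})',$$
both given explicitly by the canonical formula from Definition \ref{definition}.

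Next I would invoke smoothness of $\beta$: the map $\psi$ from $(E_1\otimes\cdots\otimes E_n\otimes F\otimes\mathbb{K},\beta_{n+2})$ to $(E_1\otimes\cdots\otimes E_n\otimes F,\beta_{n+1})$ is an isometric isomorphism, and hence its adjoint $\psi^*$ gives an isometric isomorphism between the two dual spaces above. Composing, the operator
$$\Theta:=\Phi_2^{-1}\circ \psi^*\circ \Phi_1\colon \mathcal{M}(E_1,\ldots,E_n;F')\longrightarrow \mathcal{M}(E_1,\ldots,E_n,F;\mathbb{K})$$
is an isometric isomorphism. A direct unwinding of the three defining formulas on an elementary tensor $x_1\otimes\cdots\otimes x_n\otimes y\otimes\mu$ (using $\psi(x_1\otimes\cdots\otimes x_n\otimes y\otimes\mu)=\mu(x_1\otimes\cdots\otimes x_n\otimes y)$) shows that $\Theta(T)$ is precisely the $(n+1)$-linear form $A$ with $A(x_1,\ldots,x_n,y)=T(x_1,\ldots,x_n)(y)$, that is, $\Theta$ coincides on $\mathcal{M}$ with the classical natural bijection $\mathcal{L}(E_1,\ldots,E_n;F')\leftrightarrow \mathcal{L}(E_1,\ldots,E_n,F;\mathbb{K})$.

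Finally I would use the hypothesis $\mathcal{M}(E_1,\ldots,E_n,F;\mathbb{K})=\mathcal{L}(E_1,\ldots,E_n,F;\mathbb{K})$: since $\Theta$ is a bijection from the $\mathcal{M}$-component to the $\mathcal{M}$-component and agrees with the natural bijection between the full $\mathcal{L}$-components, pulling back forces $\mathcal{M}(E_1,\ldots,E_n;F')=\mathcal{L}(E_1,\ldots,E_n;F')$, which is the desired contradiction.

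The only delicate step I anticipate is the identification of $\Theta$ with the classical bijection, which is a purely formal computation tracking the three canonical maps $\Phi_1$, $\psi$, $\Phi_2$ on elementary tensors; the rest is just stringing together isometric isomorphisms. The scalar target forces me to interpret $\mathcal{M}(E_1,\ldots,E_n,F;\mathbb{K})$ using the convention $\mathbb{K}'=\mathbb{K}$ so that Definition \ref{definition} applies, but this is standard and causes no real trouble.
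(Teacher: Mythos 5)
Your argument is correct and follows essentially the same route as the paper: both use the representation at arity $n+1$ with scalar target together with smoothness (via the adjoint $\psi^*$ linking $\beta_{n+1}$ and $\beta_{n+2}$) to force $\mathcal{M}(E_1,\ldots,E_n;F')=\mathcal{L}(E_1,\ldots,E_n;F')$, contradicting the hypothesis. The only cosmetic difference is that the paper routes the final identification through $(E_1\otimes\cdots\otimes E_n\otimes F,\pi_{n+1})'$ using the $\pi$-representation of $\mathcal{L}$, whereas you identify the composite $\Theta$ directly with the classical bijection $\mathcal{L}(E_1,\ldots,E_n;F')\leftrightarrow\mathcal{L}(E_1,\ldots,E_n,F;\mathbb{K})$, which is a slightly more direct formulation of the same step.
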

\begin{proof}Assume  that there is a smooth tensor norm $\beta =
(\beta_n)_{n=1}^\infty$ that represents  ${\cal M}$. Since $\beta$ is
smooth, the adjoint $\psi^*$ of $\psi$ is an isometric isomorphism from $(E_1
\otimes \cdots \otimes E_n\otimes F, \beta_{n+1})^\prime$ to $(E_1 \otimes \cdots
\otimes E_n \otimes F\otimes \mathbb{K}, \beta_{n+2})^\prime$.
We have the following chain of isomorphisms:
$$(E_1 \otimes \cdots \otimes
E_n\otimes F, \beta_{n+1})^\prime \stackrel{\psi^*}{\longleftrightarrow} (E_1
\otimes \cdots \otimes E_n \otimes F\otimes \mathbb{K}, \beta_{n+2})^\prime
$$
$$\stackrel{\varphi}{\longleftrightarrow}  {\cal M}(E_1, \ldots, E_n, F;\mathbb{K}^\prime) \longleftrightarrow
{\cal M}(E_1, \ldots, E_n, F;\mathbb{K}) $$
$$\stackrel{id}{\longleftrightarrow} {\cal L}(E_1, \ldots, E_n, F;\mathbb{K})\longleftrightarrow (E_1 \otimes \cdots
\otimes E_n\otimes F,\pi_{n+1})^\prime, $$ where the non-indicated mappings are the
canonical ones. It follows that the identity operator is an
algebraic isomorphism between $(E_1 \otimes \cdots \otimes E_n\otimes F,
\beta_{n+1})^\prime$ and $(E_1 \otimes \cdots \otimes E_n\otimes F,\pi_{n+1})^\prime$. Since the ideal $\mathcal{L}$ is $\pi$-represented, we have the  following chain of canonical isomorphisms:
$$
{\cal M}(E_1, \ldots, E_n;F^\prime) \longleftrightarrow (E_1 \otimes \cdots \otimes E_n\otimes F, \beta_{n+1})'$$
$$\longleftrightarrow (E_1 \otimes \cdots \otimes E_n\otimes F, \pi_{n+1})' \longleftrightarrow {\cal L}(E_1, \ldots,
E_n;F^\prime).%
$$
It follows that the identity operator is an algebraic isomorphism between the spaces ${\cal M}(E_1, \ldots, E_n;F^\prime)$ and ${\cal L}(E_1, \ldots,
E_n;F^\prime)$, which is a contradiction.
\end{proof}

The next multilinear generalization of the ideal of absolutely
summing linear operators was introduced in \cite{port}:

\begin{definition}\rm
\textrm{Given $p\geq q\geq1$, a multilinear mapping $T\in\mathcal{L}%
(E_{1},\ldots,E_{n};F)$ is said to be \textit{strongly multiple $(p,q)$%
-summing} if there exists $C\geq0$ such that
\[
\left(  \sum\limits_{j_{1},\ldots,j_{n}=1}^{m}\| T(x_{j_{1}}^{(1)}%
,\ldots,x_{j_{n}}^{(n)})\|^{p}\right)  ^{1/p}\leq C\left(  \underset
{}{\underset{\phi\in B_{\mathcal{L}(E_{1},\ldots,E_{n})}}{\sup}}%
\sum\limits_{j_{1},\ldots,j_{n}=1}^{m}\mid\phi(x_{j_{1}}^{(1)},\ldots
,x_{j_{n}}^{(n)})\mid^{q}\right)  ^{1/q}%
\]
for every $m\in\mathbb{N}$, $x_{j_{l}}^{(l)}\in E_{l}$ with
$l=1,\ldots,n$ and $j_{l}=1,\ldots,m.$ The space of all strongly
multiple $p$-summing $n$-linear mappings from
$E_{1}\times\cdots\times E_{n}$ to $F$ will be denoted by
$\mathcal{L}_{sm(p,q)}(E_{1},\ldots,E_{n};F)$. The infimum of the
constants $C$ for which the inequality always holds defines a
complete norm $\Vert \cdot\Vert_{sm(p,q)}$ on
$\mathcal{L}_{sm(p,q)}(E_{1},...,E_{n};F)$. When $p=q$ we shortly
write $\mathcal{L}_{sm,p}(E_{1},\ldots,E_{n};F)$ and
$\Vert\cdot\Vert_{s{f},p}$. }
\end{definition}

\begin{proposition}
\label{not} The multi-ideal $\mathcal{L}_{sm,p}$, $1 \leq p < +
\infty$, of strongly multiple
$p$-summing multilinear mappings is represented by a tensor norm but not by a smooth tensor
norm.
\end{proposition}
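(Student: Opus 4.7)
My plan is to invoke Proposition~\ref{p:111} to rule out any smooth representation, and then to appeal to \cite[Theorem 4.5]{hf} after a maximality check to produce a (necessarily non-smooth) representing tensor norm.

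To apply Proposition~\ref{p:111} I need to verify its two hypotheses. For the scalar-valued condition, every nonzero $T\in\mathcal{L}(E_{1},\ldots,E_{n};\mathbb{K})$ is strongly multiple $p$-summing: the functional $\phi:=T/\|T\|$ lies in $B_{\mathcal{L}(E_{1},\ldots,E_{n})}$, so testing the defining supremum against this single $\phi$ gives
\[
\Big(\sum_{j_{1},\ldots,j_{n}}|T(x_{j_{1}}^{(1)},\ldots,x_{j_{n}}^{(n)})|^{p}\Big)^{1/p}\leq\|T\|\Big(\sup_{\phi\in B_{\mathcal{L}(E_{1},\ldots,E_{n})}}\sum_{j_{1},\ldots,j_{n}}|\phi(x_{j_{1}}^{(1)},\ldots,x_{j_{n}}^{(n)})|^{p}\Big)^{1/p},
\]
so $\|T\|_{sm,p}\leq\|T\|$. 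Thus $\mathcal{L}_{sm,p}(E_{1},\ldots,E_{n},F;\mathbb{K})=\mathcal{L}(E_{1},\ldots,E_{n},F;\mathbb{K})$ for every choice of spaces. For the vector-valued failure I would use the linear case $n=1$, where $\mathcal{L}_{sm,p}$ coincides with the classical ideal $\Pi_{p}$ of absolutely $p$-summing operators; with $E_{1}=F=\ell_{2}$, the Dvoretzky--Rogers theorem tells us that the identity $\ell_{2}\to\ell_{2}$ is bounded but not $p$-summing, so $\mathcal{L}_{sm,p}(\ell_{2};\ell_{2})\neq\mathcal{L}(\ell_{2};\ell_{2})$. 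Proposition~\ref{p:111} then prohibits any smooth tensor norm from representing $\mathcal{L}_{sm,p}$.

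For the existence of some representing tensor norm, \cite[Theorem 4.5]{hf} reduces the task to showing that $\mathcal{L}_{sm,p}$ is maximal. Because the defining inequality of $\|\cdot\|_{sm,p}$ involves only finitely many vectors of the domain at a time, a standard finite-hull argument applies: if every restriction of $T$ to products of finite-dimensional subspaces of $E_{1}\times\cdots\times E_{n}$ satisfies the inequality with a uniform constant $C$, then so does $T$ on all of $E_{1}\times\cdots\times E_{n}$, giving $T\in\mathcal{L}_{sm,p}$ with $\|T\|_{sm,p}\leq C$. I expect this maximality verification to be the only non-mechanical step (one must also use $\|y\|=\sup_{N}\|q_{F}^{N}y\|_{F/N}$, supremum over finite-codimensional $N\subset F$, to match the usual quotient-side formulation of maximality); by contrast, the two Proposition~\ref{p:111} hypotheses become completely transparent once the choice $n=1$ is made.
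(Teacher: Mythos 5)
Your proposal is correct and follows the paper's own architecture: maximality of $\mathcal{L}_{sm,p}$ combined with \cite[Theorem 4.5]{hf} to produce a representing tensor norm, and Proposition~\ref{p:111} to exclude a smooth one. The only (harmless) deviation is in checking the second hypothesis of Proposition~\ref{p:111}: you observe directly that the $n=1$ component of $\mathcal{L}_{sm,p}$ is $\Pi_p$ and invoke Dvoretzky--Rogers on $\ell_2$, whereas the paper passes from the $n$-linear coincidence to the linear one via \cite[Proposition 5.2(iii)]{port} and uses the canonical injection $E\hookrightarrow E''$; both verifications are valid.
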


\begin{proof} Following the lines of \cite[Proposici\'on
4.37]{david} it is not difficult to prove that ${\cal L}_{sm,p}$ is maximal. So it follows from
\cite[Theorem 4.5]{hf} that ${\cal L}_{sm,p}$ is represented by a
tensor norm.

 Let us see that ${\cal L}_{sm,p}$ cannot be represented by a smooth
tensor norm. It is clear that $ {\cal L}_{sm,p}(E_1, \ldots,
E_n,F;\mathbb{K}) = {\cal L}(E_1, \ldots, E_n,F;\mathbb{K}) $ for
every integer $n$ and every Banach spaces $E_1, \ldots, E_n$ and
$F$. On the other hand, assuming that ${\cal L}_{sm,p}(^nE,;F^\prime) = {\cal L}(^nE;F^\prime)$ for every
integer $n$ and every Banach spaces $E$ and $F$,
by \cite[Proposition 5.2(iii)]{port} we would have ${\cal
L}(E;F^\prime) = \Pi_p(E;F^\prime)$ for every $E$ and $F$. This is
absurd because for every infinite-dimensional Banach space $E$ the
canonical injection $E \hookrightarrow E^{''} = (E')'$ fails to be
$p$-summing. Hence ${\cal L}(^nE;F^\prime) \neq {\cal L}_{sm,p}(^nE;F^\prime)$ for
some $n$ and some Banach spaces $E$ and $F$. By Proposition \ref{p:111} it
follows that ${\cal L}_{sm,p}$ cannot be represented by a smooth
tensor norm.
\end{proof}

\begin{corollary}\label{coro} For $\mathcal{M} = \mathcal{L}_{sm,p}$ we have
$\mathcal{M} \ne\mathcal{L}_{\beta^{\mathcal{M}}}$.
\end{corollary}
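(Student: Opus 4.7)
The plan is to combine Proposition \ref{not} with the final assertion of Proposition \ref{max}, the missing ingredient being that $\mathcal{L}_{sm,p}$ has property [B]. Once this is granted, the argument is a one-line contradiction: if $\mathcal{M}=\mathcal{L}_{\beta^{\mathcal{M}}}$ held, then by Proposition \ref{max} (together with property [B]) the multi-ideal $\mathcal{M}=\mathcal{L}_{sm,p}$ would be represented by a smooth tensor norm, contradicting Proposition \ref{not}.

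So the only real step is to check that $\mathcal{L}_{sm,p}$ has property [B]. Given $A\in\mathcal{L}(E_{1},\ldots,E_{n},\mathbb{K};F)$, multilinearity in the last slot forces $A(x_{1},\ldots,x_{n},\lambda)=\lambda\,A1(x_{1},\ldots,x_{n})$. Moreover, the canonical identification $\mathcal{L}(E_{1},\ldots,E_{n},\mathbb{K})\longleftrightarrow\mathcal{L}(E_{1},\ldots,E_{n})$, $\phi\mapsto\phi(\cdot,1)$, is an isometric isomorphism, so the correspondence $\phi\mapsto\psi$ with $\phi(x_{1},\ldots,x_{n},\lambda)=\lambda\,\psi(x_{1},\ldots,x_{n})$ identifies the closed unit balls. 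Plugging these two observations into the defining inequality, a direct calculation gives, for any finite families $(x_{j_{l}}^{(l)})$ and $(\lambda_{j_{n+1}})$,
\[
\sum_{j_{1},\ldots,j_{n+1}}\|A(x_{j_{1}}^{(1)},\ldots,x_{j_{n}}^{(n)},\lambda_{j_{n+1}})\|^{p}=\Bigl(\sum_{j_{n+1}}|\lambda_{j_{n+1}}|^{p}\Bigr)\sum_{j_{1},\ldots,j_{n}}\|A1(x_{j_{1}}^{(1)},\ldots,x_{j_{n}}^{(n)})\|^{p}
\]
and similarly for the $\sup_{\phi}$ side, with the same scalar factor $\sum|\lambda_{j_{n+1}}|^{p}$. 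Cancelling this factor shows that $A$ satisfies the $sm,p$-inequality with constant $C$ if and only if $A1$ does, and with the same optimal constant. Hence $A\in\mathcal{L}_{sm,p}$ iff $A1\in\mathcal{L}_{sm,p}$ and $\|A\|_{sm,p}=\|A1\|_{sm,p}$. The converse inequality $\|A1\|_{sm,p}\le\|A\|_{sm,p}$ is even simpler: apply the inequality for $A$ to the enlarged sequences obtained by adjoining the single scalar $\lambda_{1}=1$ in the last slot.

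With property [B] established, the corollary follows as outlined above. The only step that might be considered nontrivial is verifying property [B], but the factorisation $A(x_{1},\ldots,x_{n},\lambda)=\lambda\,A1(x_{1},\ldots,x_{n})$ together with the isometric identification of the relevant unit balls reduces it to bookkeeping. I expect no real obstacle.
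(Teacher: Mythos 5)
Your argument is correct and coincides with the paper's own proof: both establish property [B] for $\mathcal{L}_{sm,p}$ via the isometric identification of $\mathcal{L}(E_1,\ldots,E_n,\mathbb{K};\mathbb{K})$ with $\mathcal{L}(E_1,\ldots,E_n;\mathbb{K})$ and then derive the contradiction from the final assertion of Proposition \ref{max} combined with Proposition \ref{not}. You merely spell out the factorisation computation that the paper dismisses as ``easy to check.''
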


\begin{proof} It is easy to check that $\mathcal{L}_{sm,p}$ has property [B]. Indeed, it is enough to combine the definition of
${\cal L}_{sm,p}$ with the well-known fact that the space of
$(n+1)$-linear forms $\mathcal{L}(E_{1}, \ldots, E_{n},
\mathbb{K};\mathbb{K})$ is isometrically isometric to the space of
$n$-linear forms $\mathcal{L}(E_{1}, \ldots, E_{n};\mathbb{K})$ via
the obvious correspondence. Assuming that
$\mathcal{M} = \mathcal{L}_{\beta^{\mathcal{M}}}$, by Proposition \ref{max} $\mathcal{L}_{sm,p}$ would be represented by a smooth tensor
norm; but this is not true by
Proposition \ref{not}.
\end{proof}

\begin{remark}\rm For the sake of completeness, let us construct the tensor norm that represents the
multi-ideal ${\cal L}_{sm,p}$: Given normed spaces
$E_{1},\ldots,E_{n},F$ and $p\geq1$, define\\

$ \beta_{p}(u):=$
\[\inf\left[  \sum_{m=1}^{M}\|(b_{m,j^{1}_{m},\ldots
,j^{n}_{m}})_{j^{1}_{m},\ldots,j^{n}_{m}=1}^{I^{1}_{m},\ldots,I^{n}_{m}%
}\|_{q} \left(  {\underset{\phi\in
B_{\mathcal{L}(E_{1},\ldots,E_{n})}}
{\sup}}\sum\limits_{j^{1}_{m},\ldots,j^{n}_{m}=1}^{I^{1}_{m},\ldots,I^{n}_{m}%
}\mid\phi(x^{(1)}_{m,j^{1}_{m}},\ldots,x^{(n)}_{m,j^{n}_{m}})\mid^{p}\right)
^{1/p}\right]
\]
where the infimum is taken over the set of all representations of
the tensor $u\in E_{1}\otimes\cdots\otimes E_{n}\otimes F$ of the
form
$$u=\sum_{m=1}^{M}\sum\limits_{j^{1}_{m},\ldots,j^{n}_{m}=1}^{I^{1}_{m}%
,\ldots,I^{n}_{m}} x^{(1)}_{m,j^{1}_{m}}\otimes\cdots\otimes x^{(n)}_{m,j^{n}%
_{m}}\otimes b_{m,j^{1}_{m},\ldots,j^{n}_{m}} $$
with $M\in\mathbb{N}$, $x^{(l)}_{m,j^{l}_{m}}\in E_{l}$,
$l=1,\ldots,n$, $b_{m,j^{1}_{m},\ldots,j^{n}_{m}}\in F$,
$j^{l}_{m}=1,\ldots,I^{l}_{m}$, and $q\geq1$ with
$\frac{1}{p}+\frac{1}{q}=1$. Following the lines of \cite[Teorema 4.38]{david} one can prove that
$\beta_p$ is a tensor norm and that ${\cal L}_{sm,p}$ is
$\beta_p$-represented. From Proposition \ref{not} it follows that
$\beta_p$ is not smooth.\label{r:12}
\end{remark}


The ideal $\mathcal{L}_{m,p}$ of multiple $p$-summing multilinear mappings, introduced by Matos \cite{mapre} and, independently, by Bombal, P\'erez-Garc\'ia and Villanueva \cite{bpgv}, has played a central role in the theory of multi-ideals, providing even unexpected applications (see Remark \ref{remark}(a)).

\begin{proposition}
For $1 \leq p \leq2$, the ideal $\mathcal{L}_{m,p}$ of multiple
$p$-summing multilinear mappings is represented by a tensor norm
but not by a smooth tensor norm.
\end{proposition}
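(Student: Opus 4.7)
The plan is to imitate Proposition \ref{not}, applying Proposition \ref{p:111}.

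First I would show that $\mathcal{L}_{m,p}$ is a maximal multi-ideal; this is standard and can be carried out along the lines of \cite[Proposici\'on 4.37]{david}, in the same routine manner already used for $\mathcal{L}_{sm,p}$ in the proof of Proposition \ref{not}. Invoking \cite[Theorem 4.5]{hf}, this produces a (finitely generated) tensor norm that represents $\mathcal{L}_{m,p}$, establishing the first half of the statement.

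Next I would verify the two hypotheses of Proposition \ref{p:111}. For the equality
\[
\mathcal{L}_{m,p}(E_1, \ldots, E_n, F; \mathbb{K}) = \mathcal{L}(E_1, \ldots, E_n, F; \mathbb{K}),
\]
valid for every $n$ and all Banach spaces $E_1, \ldots, E_n, F$, I would appeal to the known multilinear extension of Grothendieck's theorem: in the range $1 \leq p \leq 2$ every continuous scalar-valued multilinear form is multiple $p$-summing (see \cite{bpgv} and subsequent work by P\'erez-Garc\'ia). For the required strict inequality it suffices to take $n = 1$, an arbitrary infinite-dimensional Banach space $E_1$, and $F := E_1'$: multiple $p$-summing in the single-variable case coincides with the classical $p$-summing ideal $\Pi_p$, and the canonical inclusion $E_1 \hookrightarrow E_1'' = F'$ is bounded but fails to be $p$-summing, so $\mathcal{L}_{m,p}(E_1; F') \subsetneq \mathcal{L}(E_1; F')$. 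Proposition \ref{p:111} then rules out representation of $\mathcal{L}_{m,p}$ by any smooth tensor norm.

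The main obstacle is the equality hypothesis above: one must cite (or reprove) the nontrivial result that scalar-valued multilinear forms are automatically multiple $p$-summing for $p \in [1,2]$. The other ingredients — maximality of $\mathcal{L}_{m,p}$, the reduction to the linear case $n = 1$ via the identification of multiple $p$-summing with $\Pi_p$, and the classical failure of the canonical embedding $E \hookrightarrow E''$ to be $p$-summing — are routine and mirror the argument given in Proposition \ref{not}.
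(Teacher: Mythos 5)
The first half of your argument is fine (maximality plus \cite[Theorem 4.5]{hf}; the paper simply cites \cite[Proposici\'on 4.39]{david} directly for the representation), and the overall strategy of invoking Proposition \ref{p:111} is exactly the paper's. The gap is in the verification of the equality hypothesis. The statement you invoke --- that for $1\le p\le 2$ \emph{every} continuous scalar-valued multilinear form on \emph{arbitrary} Banach spaces is multiple $p$-summing --- is false. The Grothendieck-type coincidences for multiple summing forms hold only on special spaces ($\mathcal{L}_1$- or $\mathcal{L}_\infty$-spaces), and they fail on Hilbert spaces: the bilinear form $A(x,y)=\sum_n x_ny_n$ on $\ell_2\times\ell_2$ is bounded, yet testing it on the first $m$ unit vectors gives $\bigl(\sum_{i,j=1}^m|A(e_i,e_j)|^2\bigr)^{1/2}=m^{1/2}$ while the weak $\ell_2$-norm of $(e_i)_{i=1}^m$ in $\ell_2$ equals $1$; so $A$ is not multiple $2$-summing and, by the inclusion $\mathcal{L}_{m,p}\subseteq\mathcal{L}_{m,2}$, not multiple $p$-summing for any $p\le2$. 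You may be carrying over the situation of $\mathcal{L}_{sm,p}$ in Proposition \ref{not}, where the scalar-valued coincidence is indeed trivial for all spaces because the dominating quantity there is a supremum over \emph{multilinear} forms $\phi$; for $\mathcal{L}_{m,p}$ the dominating quantity is a product of weak $\ell_p$-norms and the coincidence is a deep, space-specific theorem.

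Consequently your choice of spaces does not verify the hypotheses of Proposition \ref{p:111}: both conditions there must hold for the \emph{same} spaces, and with $n=1$, $E_1$ arbitrary infinite-dimensional and $F=E_1'$ the required equality $\mathcal{L}_{m,p}(E_1,E_1';\mathbb{K})=\mathcal{L}(E_1,E_1';\mathbb{K})$ fails already for $E_1=\ell_2$. The repair is the choice the paper makes: take $n=1$ and $E_1=F=\ell_1$. Then the equality hypothesis is the genuine bilinear Grothendieck-type coincidence $\mathcal{L}_{m,p}(^2\ell_1;\mathbb{K})=\mathcal{L}(^2\ell_1;\mathbb{K})$ (\cite[Teorema 5.23]{david}), and the inequality hypothesis is $\mathcal{L}_{m,p}(\ell_1;\ell_1')=\Pi_p(\ell_1;\ell_\infty)\neq\mathcal{L}(\ell_1;\ell_\infty)$, which is elementary (an isomorphic embedding of $\ell_1$ into $\ell_\infty$ that were $p$-summing would force the identity of $\ell_1$ to be $p$-summing). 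With that substitution your argument goes through and coincides with the paper's proof.
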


\begin{proof} That $\mathcal{L}_{m,p}$ is represented by a tensor norm is proved in \cite[Proposici\'on 4.39]{david}. From
$\mathcal{L}_{m,p}(^{2}\ell_{1};\mathbb{K}) = \mathcal{L}(^{2} \ell
_{1};\mathbb{K})$ \cite[Teorema 5.23]{david} and $\Pi_{p}(\ell_{1};
\ell_{\infty})= \mathcal{L}_{m,p}(\ell_{1}; \ell_{\infty}) \neq\mathcal{L}%
(\ell_{1};\ell_{\infty})$ (obvious), the non-representability by a smooth tensor norm follows from Proposition \ref{p:111}.
\end{proof}

\begin{remark}\rm \label{remark}(a) As proved by Defant and P\'erez-Garc\'ia \cite{dpg}, the tensor norm that represents the ideal $\mathcal{L}_{m,p}$ of multiple $p$-summing multilinear mappings is the first example of a tensor norm that preserves unconditionality for ${\cal L}_p$-spaces.\\
\noindent (b) Everything we proved for the multi-ideal
$\mathcal{L}_{sm,p}$ of strongly multiple $p$-summing multilinear mappings can
be proved, \textit{mutatis mutandis}, for the multi-ideal $\mathcal{L}_{ss,p}$
of strongly $p$-summing multilinear mappings introduced in \cite{dimant}.
\end{remark}


We finish this section with another hint that the representation of a (vector-valued) multi-ideal by a smooth tensor norm is not commonplace.

\begin{proposition}\label{propos}
{ The multi-ideal ${\cal L}_{\varepsilon}$ is not represented by a
smooth tensor norm.}
\end{proposition}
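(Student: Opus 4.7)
The strategy is to show that if $\mathcal{L}_{\varepsilon}$ is represented by any smooth tensor norm $\beta=(\beta_{n})_{n=1}^{\infty}$, then one forces $\beta=\varepsilon$, contradicting Proposition~\ref{injec}. The mechanism is that representation together with smoothness identifies the dual of $\beta_{n}$ canonically with the dual of $\varepsilon_{n}$, and a Hahn--Banach argument then promotes this to equality of the norms themselves.

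First I would specialize the representation by $\beta$ to the scalar-valued case: taking $F=\mathbb{K}$ in Definition~\ref{definition} gives an isometric isomorphism
\[
\mathcal{L}_{\varepsilon}(E_{1},\ldots,E_{n};\mathbb{K}) \;\cong\; (E_{1}\otimes\cdots\otimes E_{n}\otimes\mathbb{K},\beta_{n+1})'
\]
through the canonical map $\varphi_{\beta}$. Smoothness of $\beta$ gives the isometric isomorphism $\psi$ between $(E_{1}\otimes\cdots\otimes E_{n}\otimes\mathbb{K},\beta_{n+1})$ and $(E_{1}\otimes\cdots\otimes E_{n},\beta_{n})$, so dualizing produces an isometric identification
\[
\mathcal{L}_{\varepsilon}(E_{1},\ldots,E_{n};\mathbb{K}) \;\cong\; (E_{1}\otimes\cdots\otimes E_{n},\beta_{n})'.
\]
A direct computation on a simple tensor $x_{1}\otimes\cdots\otimes x_{n}$ shows this composed identification sends $T$ to its linearization $T_{L}$. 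Independently, by the very definition of $\mathcal{L}_{\varepsilon}$, the map $T\mapsto T_{L}$ already gives an isometric identification $\mathcal{L}_{\varepsilon}(E_{1},\ldots,E_{n};\mathbb{K})\cong (E_{1}\otimes\cdots\otimes E_{n},\varepsilon_{n})'$.

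Comparing the two identifications, the identity map on $E_{1}\otimes\cdots\otimes E_{n}$ extends to an isometric isomorphism between $(E_{1}\otimes\cdots\otimes E_{n},\beta_{n})'$ and $(E_{1}\otimes\cdots\otimes E_{n},\varepsilon_{n})'$. By the Hahn--Banach argument used in the proof of Theorem~\ref{theorem}, this forces $\beta_{n}=\varepsilon_{n}$ on $E_{1}\otimes\cdots\otimes E_{n}$; since $n$ and $E_{1},\ldots,E_{n}$ are arbitrary, $\beta=\varepsilon$. This contradicts Proposition~\ref{injec}, which asserts that $\mathcal{L}_{\varepsilon}$ is not $\varepsilon$-represented. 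The only delicate point is checking that the composite isomorphism obtained from $\varphi_{\beta}$ and the dual of $\psi^{-1}$ really corresponds to linearization, so that the two natural identifications of $\mathcal{L}_{\varepsilon}(E_{1},\ldots,E_{n};\mathbb{K})$ coming from $\beta$ and from $\varepsilon$ agree on the underlying linear level; after that verification the conclusion is immediate.
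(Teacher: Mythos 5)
Your proof is correct and follows essentially the same route as the paper: specialize the representation to $F=\mathbb{K}$, use smoothness to pass to $(E_1\otimes\cdots\otimes E_n,\beta_n)'$, compare with the canonical identification of $\mathcal{L}_\varepsilon(E_1,\ldots,E_n;\mathbb{K})$ with $(E_1\otimes\cdots\otimes E_n,\varepsilon_n)'$, and apply Hahn--Banach to conclude $\beta=\varepsilon$, contradicting Proposition~\ref{injec}. The only cosmetic difference is that you obtain the $\varepsilon$-side identification directly from the definition of $\mathcal{L}_\varepsilon$ (linearization is an isometry onto the $\varepsilon_n$-dual by construction), whereas the paper forward-references Corollary~\ref{cor}; both are legitimate.
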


\begin{proof}Assume that $\mathcal{L}_\varepsilon$ is represented by a smooth tensor norm $\beta=(\beta_n)_{n=1}^\infty$. Fix a positive integer $n$ and Banach spaces $E_1,\ldots,E_n$. Since $\mathcal{L}_\varepsilon$ is $\beta$-represented, the mapping

$$\varphi:(\mathcal{L}_\varepsilon(E_1,\ldots, E_n;F'),\parallel.\parallel_{\mathcal{L}_\varepsilon})\longrightarrow
(E_1\otimes\cdots\otimes E_n\otimes F,\beta_{n+1})'$$
$$T\longrightarrow\varphi(T)(x_1\otimes\cdots\otimes x_n\otimes y)=T(x_1,\ldots,x_n)(y)$$
is an isometric isomorphism for every $F$. In particular, taking $F=\mathbb{K}$, we obtain the following isometric isomorphisms:
 $$(\mathcal{L}_\varepsilon(E_1,\ldots, E_n;\mathbb{K}'),\parallel.\parallel_{\mathcal{L}_\varepsilon})\cong(E_1\otimes\cdots\otimes E_n\otimes \mathbb{K},\beta_{n+1})'\cong(E_1\otimes\cdots\otimes E_n,\beta_{n})'.$$
 In Corollary \ref{cor} we shall prove that following spaces are also isometric isomorphic, with the same canonical correspondences:
$$(\mathcal{L}_\varepsilon(E_1,\ldots, E_n;\mathbb{K}'),\parallel.\parallel_{\mathcal{L}_\varepsilon})\cong(E_1\otimes\cdots\otimes E_n\otimes \mathbb{K},\varepsilon_{n+1})'\cong(E_1\otimes\cdots\otimes E_n,\varepsilon_{n})'.$$
\noindent Hence the identity mapping $(E_1\otimes\cdots\otimes E_n,\varepsilon_n)'\longrightarrow(E_1\otimes\cdots\otimes E_n,\beta_n)'$ is an isometric isomorphism. Calling on Hahn-Banach once again we get that $\varepsilon_n$ and $\beta_n$ coincide on $E_1\otimes\cdots\otimes E_n$. It follows that the ideal $\mathcal{L}_\varepsilon$ is $\varepsilon$-represented, which contradicts Proposition \ref{injec}.
\end{proof}

\section{Scalar-valued case}

The aim of this section is show that smooth tensor norms are more suitable to represent
ideals of multilinear forms.

Given a tensor norm $\beta$, we write
$\mathcal{L}_{\beta}^{\mathbb{K}}:=(\mathcal{L}_{\beta})^{\mathbb{K}}%
$.\newline\indent We shall write ${\cal M}^{\mathbb{K}}
\stackrel{1}{=} {\cal L}_{\beta}^{\mathbb{K}}$ if for every $n$ and
every $E_1, \ldots, E_n$, the linearization operator $\Phi = \Phi(n,
E_1, \ldots, E_n)$: $$A \in {\cal M}(E_1, \ldots, E_n; \mathbb{K})
\mapsto \Phi(A) := A_L,$$ is an isometric isomorphism from ${\cal
M}(E_1, \ldots, E_n; \mathbb{K})$ onto $(E_1 \otimes\cdots \otimes
E_n, \beta_{n})^\prime$.

\begin{theorem}
\label{th}Let $\mathcal{M}$ be a multi-ideal and $\beta= (\beta_{n}%
)_{n=1}^{\infty}$ be a smooth tensor norm. Then the ideal of
multilinear forms $\mathcal{M}^{\mathbb{K}}$ is $\beta$-represented
if and only if ${\cal M}^{\mathbb{K}}
\stackrel{1}{=} {\cal L}_{\beta}^{\mathbb{K}}$.
\end{theorem}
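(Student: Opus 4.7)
The plan is to verify the factorization $\varphi = \psi^* \circ \Phi$, where $\psi$ is the smoothness map of Definition \ref{def} and $\Phi$ is the linearization $A \mapsto A_L$. Once this factorization is in hand, the theorem reduces to the observation that $\psi^*$ is automatically an isometric isomorphism under the smoothness hypothesis.

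First I would fix $n$ and Banach spaces $E_1, \ldots, E_n$. Smoothness of $\beta$ tells us that $\psi \colon (E_1 \otimes \cdots \otimes E_n \otimes \mathbb{K}, \beta_{n+1}) \longrightarrow (E_1 \otimes \cdots \otimes E_n, \beta_n)$ is an isometric isomorphism, and passing to Banach-space adjoints yields that $\psi^* \colon (E_1 \otimes \cdots \otimes E_n, \beta_n)' \longrightarrow (E_1 \otimes \cdots \otimes E_n \otimes \mathbb{K}, \beta_{n+1})'$ is likewise an isometric isomorphism.

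Second, for $A \in \mathcal{M}(E_1, \ldots, E_n; \mathbb{K})$ and an elementary tensor I compute
\begin{align*}
\varphi(A)(x_1 \otimes \cdots \otimes x_n \otimes \lambda)
&= \lambda A(x_1, \ldots, x_n) = A_L(\lambda(x_1 \otimes \cdots \otimes x_n)) \\
&= A_L(\psi(x_1 \otimes \cdots \otimes x_n \otimes \lambda)) = \psi^*(A_L)(x_1 \otimes \cdots \otimes x_n \otimes \lambda).
\end{align*}
By linearity and the density of elementary tensors, $\varphi(A) = \psi^*(A_L) = \psi^*(\Phi(A))$, so $\varphi = \psi^* \circ \Phi$ as maps defined on $\mathcal{M}(E_1, \ldots, E_n; \mathbb{K})$.

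Third, the equivalence drops out immediately. If $\mathcal{M}^{\mathbb{K}}$ is $\beta$-represented then $\varphi$ is an isometric isomorphism onto $(E_1 \otimes \cdots \otimes E_n \otimes \mathbb{K}, \beta_{n+1})'$, so $\Phi = (\psi^*)^{-1} \circ \varphi$ is an isometric isomorphism onto $(E_1 \otimes \cdots \otimes E_n, \beta_n)'$; in particular, $A_L$ is automatically $\beta_n$-continuous for every $A \in \mathcal{M}$, witnessing $\mathcal{M}^{\mathbb{K}} \stackrel{1}{=} \mathcal{L}_\beta^{\mathbb{K}}$. Conversely, if $\mathcal{M}^{\mathbb{K}} \stackrel{1}{=} \mathcal{L}_\beta^{\mathbb{K}}$, so that $\Phi$ is an isometric isomorphism, then $\varphi = \psi^* \circ \Phi$ is the composition of two isometric isomorphisms, hence itself an isometric isomorphism, which is precisely $\beta$-representation of $\mathcal{M}^{\mathbb{K}}$. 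I do not expect a genuine obstacle here: the content of the argument is just the observation that the two canonical maps differ only by the smoothness isometry $\psi^*$, and the slight subtlety about automatic $\beta_n$-continuity of $A_L$ in the forward direction is handled precisely by surjectivity of $\psi^*$.
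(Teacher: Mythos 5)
Your proposal is correct and takes essentially the same route as the paper: both rest on the factorization $\varphi = \psi^* \circ \Phi$ together with the fact that smoothness makes $\psi^*$ an isometric isomorphism, the only cosmetic difference being that the paper inserts an explicit canonical isometry $\xi\colon \mathcal{M}(E_1,\ldots,E_n;\mathbb{K}')\to\mathcal{M}(E_1,\ldots,E_n;\mathbb{K})$ where you silently identify $\mathbb{K}$ with $\mathbb{K}'$. Your closing remark correctly locates the one genuine point to check, namely that surjectivity of $\psi^*$ (equivalently, continuity of $\psi^{-1}$) is what forces $A_L$ to be $\beta_n$-continuous in the forward direction.
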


\begin{proof} Let $n \in \mathbb{N}$ and $E_1, \ldots, E_n$ be Banach spaces. We shall say
that $\Phi$ is well defined, isometric and onto if $\Phi(A) = A_L
\in (E_1 \otimes\cdots \otimes E_n, \beta_{n})^\prime$ {for every $A
\in {\cal M}(E_1, \ldots, E_n; \mathbb{K})$} and $\Phi \colon {\cal
M}(E_1, \ldots, E_n; \mathbb{K}) \longrightarrow (E_1 \otimes\cdots
\otimes E_n, \beta_{n})^\prime$ is {isometric and} surjective.
Assume that $\Phi$ is well defined, {isometric} and onto. It is
clear that $\Phi$ is linear and injective, {so $\Phi$ is well
defined, isometric and onto if and only if $\Phi$ is an isometric
isomorphism from ${\cal M}(E_1, \ldots, E_n; \mathbb{K})$ onto $(E_1
\otimes\cdots \otimes E_n,
\beta_{n})^\prime$.}\\
\indent We continue assuming that $\Phi$ is well defined,{isometric}
and onto. Let $h \colon \mathbb{K}' \longrightarrow \mathbb{K}$ be
the isometric isomorphism given by $h(f) = f(1)$ for every $f \in
\mathbb{K}'$. It is clear that the linear mapping
$$\xi \colon {\cal M}(E_1, \ldots, E_n;\mathbb{K}^\prime) \longrightarrow {\cal M}(E_1, \ldots, E_n; \mathbb{K})~,~\xi(A) = h \circ A,  $$
is an isometric isomorphism as well. Considering the chain
$${\cal M}(E_1, \ldots, E_n;\mathbb{K}^\prime) \stackrel{\xi}{\longrightarrow} {\cal M}(E_1, \ldots, E_n; \mathbb{K})
\stackrel{\Phi}{\longrightarrow} (E_1 \otimes\cdots \otimes E_n, \beta_{n})^\prime$$
$$\stackrel{\psi^*}{\longrightarrow}
(E_1 \otimes\cdots \otimes E_n \otimes \mathbb{K},
\beta_{n+1})^\prime,$$ where $\psi^*$ is the adjoint of the linear
operator $\psi$ of the definition of smooth tensor norm, it is not
difficult to check that $\varphi = \psi^* \circ \Phi \circ \xi$
(hence $\Phi = (\psi^*)^{-1} \circ \varphi \circ \xi^{-1})$, where
$\varphi = \varphi(n, E_1, \ldots, E_n)$ is the operator of
Definition \ref{definition}. \\
\indent Hence ${\cal M}^{\mathbb{K}} \stackrel{1}{=} {\cal L}_{\beta}^{\mathbb{K}}$ if and only if $\Phi(n,
E_1, \ldots, E_n)$ is an isometric isomorphism for every $n, E_1, \ldots, E_n$ if and only if $\Phi(n, E_1,
\ldots, E_n)$ is well defined, isometric and onto for every $n, E_1, \ldots, E_n$ if and only
if $\varphi(n, E_1, \ldots, E_n)$ is an isometric isomorphism for every $n, E_1, \ldots, E_n$ if and only if
${\cal M}^{\mathbb{K}}$ is $\beta$-represented.
\end{proof}

As we saw before (see, e.g., Proposition \ref{injec} and Corollary \ref{coro}), the theorem above cannot be generalized to vector-valued multi-ideals.

\begin{corollary}
\label{cor} Let $\beta$ be a smooth tensor norm. Then the ideal of
multilinear forms $\mathcal{L}_{\beta}^{\mathbb{K}}$ is
{$\beta$-represented and thus is} represented by a smooth tensor
norm.
\end{corollary}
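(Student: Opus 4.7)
The plan is to deduce the corollary directly from Theorem \ref{th} applied to the multi-ideal $\mathcal{M} = \mathcal{L}_\beta$. By that theorem, since $\beta$ is smooth, it suffices to verify that $\mathcal{L}_\beta^{\mathbb{K}} \stackrel{1}{=} \mathcal{L}_\beta^{\mathbb{K}}$, i.e., that for every $n$ and every Banach spaces $E_1, \ldots, E_n$, the linearization map
\[
\Phi \colon \mathcal{L}_\beta(E_1, \ldots, E_n; \mathbb{K}) \longrightarrow (E_1 \otimes \cdots \otimes E_n, \beta_n)', \qquad \Phi(A) = A_L,
\]
is an isometric isomorphism.

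First I would observe that this is essentially built into the definition of $\mathcal{L}_\beta$: a multilinear form $A \in \mathcal{L}(E_1, \ldots, E_n; \mathbb{K})$ belongs to $\mathcal{L}_\beta(E_1, \ldots, E_n; \mathbb{K})$ precisely when its linearization $A_L$ is a bounded functional on $(E_1 \otimes \cdots \otimes E_n, \beta_n)$, and $\|A\|_{\mathcal{L}_\beta}$ is defined to equal $\|A_L\|$. Thus $\Phi$ is well defined and isometric by construction. Linearity and injectivity are immediate (distinct multilinear forms have distinct linearizations).

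Surjectivity is the only remaining point, and it is straightforward: given $f \in (E_1 \otimes \cdots \otimes E_n, \beta_n)'$, the map $A(x_1, \ldots, x_n) := f(x_1 \otimes \cdots \otimes x_n)$ is multilinear, and its continuity follows from the fact that $\beta_n$ is a reasonable crossnorm, so $\beta_n(x_1 \otimes \cdots \otimes x_n) \leq \|x_1\| \cdots \|x_n\|$, giving $|A(x_1, \ldots, x_n)| \leq \|f\| \cdot \|x_1\| \cdots \|x_n\|$. By construction $A_L = f$ on elementary tensors, hence on all of $E_1 \otimes \cdots \otimes E_n$ by linearity, so $A \in \mathcal{L}_\beta$ with $\Phi(A) = f$.

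Having established that $\Phi$ is an isometric isomorphism, Theorem \ref{th} yields that $\mathcal{L}_\beta^{\mathbb{K}}$ is $\beta$-represented, and since $\beta$ is smooth by hypothesis, this shows $\mathcal{L}_\beta^{\mathbb{K}}$ is represented by a smooth tensor norm. I do not anticipate any genuine obstacle here — the corollary is a clean specialization of Theorem \ref{th} to the tautological case $\mathcal{M} = \mathcal{L}_\beta$, where the equality $\mathcal{M}^{\mathbb{K}} \stackrel{1}{=} \mathcal{L}_\beta^{\mathbb{K}}$ holds by definition up to the trivial surjectivity check above.
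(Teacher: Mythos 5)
Your proposal is correct and matches the paper's (implicit) argument: the corollary is stated without proof precisely because the condition $\mathcal{L}_\beta^{\mathbb{K}} \stackrel{1}{=} \mathcal{L}_\beta^{\mathbb{K}}$ of Theorem \ref{th} holds essentially by the definition of $\mathcal{L}_\beta$ and its norm, with surjectivity of $\Phi$ following from the reasonable-crossnorm inequality exactly as you describe. Nothing is missing.
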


\begin{example}\label{scalar}\rm The ideal of multilinear forms ${\cal L}_{\varepsilon}^\mathbb{K}$ is $\varepsilon$-represented, hence represented by a smooth tensor norm.
\end{example}

\begin{corollary}
{Let ${\cal M}$ be a multi-ideal. If ${\cal M}^{\mathbb{K}}$ is
represented by a smooth tensor norm then ${\cal M}^{\mathbb{K}}$
contains the integral multilinear forms.}
\end{corollary}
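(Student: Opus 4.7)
The plan is to reduce the statement to a simple comparison of dual norms via Theorem \ref{th}. Suppose $\mathcal{M}^{\mathbb{K}}$ is represented by a smooth tensor norm $\beta = (\beta_n)_{n=1}^\infty$. Applying Theorem \ref{th} to $\mathcal{M}$ and $\beta$ yields $\mathcal{M}^{\mathbb{K}} \stackrel{1}{=} \mathcal{L}_\beta^{\mathbb{K}}$; in other words, for every $n$ and every $E_1, \ldots, E_n$, the linearization map is an isometric isomorphism from $\mathcal{M}(E_1, \ldots, E_n; \mathbb{K})$ onto $(E_1 \otimes \cdots \otimes E_n, \beta_n)'$. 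So it suffices to show that the linearization of every integral multilinear form lies in $(E_1 \otimes \cdots \otimes E_n, \beta_n)'$.

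Next I would recall from Example \ref{example}(b) that $\varepsilon$ is smooth and $\mathcal{L}_{\mathcal{I}}$ is $\varepsilon$-represented; in particular $\mathcal{L}_{\mathcal{I}}^{\mathbb{K}}$ is $\varepsilon$-represented. Applying Theorem \ref{th} a second time (to $\mathcal{M} = \mathcal{L}_{\mathcal{I}}$ and $\beta = \varepsilon$) gives $\mathcal{L}_{\mathcal{I}}^{\mathbb{K}} \stackrel{1}{=} \mathcal{L}_\varepsilon^{\mathbb{K}}$, i.e., a scalar-valued $n$-linear form $T$ is integral exactly when its linearization $T_L$ belongs to $(E_1 \otimes \cdots \otimes E_n, \varepsilon_n)'$.

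The final step is the simple inclusion $(E_1 \otimes \cdots \otimes E_n, \varepsilon_n)' \subseteq (E_1 \otimes \cdots \otimes E_n, \beta_n)'$, which is immediate from the fact that $\varepsilon_n$ is the smallest reasonable crossnorm: since $\varepsilon_n \leq \beta_n$, any estimate $|f(z)| \leq C\,\varepsilon_n(z)$ automatically yields $|f(z)| \leq C\,\beta_n(z)$. Hence if $T$ is an integral $n$-linear form then $T_L \in (E_1 \otimes \cdots \otimes E_n, \beta_n)'$, so by the isomorphism from the first paragraph $T \in \mathcal{M}(E_1, \ldots, E_n; \mathbb{K})$. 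Since $n$ and $E_1, \ldots, E_n$ were arbitrary, $\mathcal{L}_{\mathcal{I}}^{\mathbb{K}} \subseteq \mathcal{M}^{\mathbb{K}}$, as desired.

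There is no real obstacle here; the only thing to be careful about is that the canonical identifications in both applications of Theorem \ref{th} are the \emph{same} linearization map, so the inclusion of dual spaces translates into a genuine inclusion of classes of multilinear forms rather than merely an abstract injection.
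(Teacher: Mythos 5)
Your proposal is correct and follows essentially the same route as the paper: apply Theorem \ref{th} to identify $\mathcal{M}^{\mathbb{K}}$ with $\mathcal{L}_{\beta}^{\mathbb{K}}$, use $\mathcal{L}_{\mathcal{I}}^{\mathbb{K}} = \mathcal{L}_{\varepsilon}^{\mathbb{K}}$, and conclude via $\varepsilon \leq \beta$. The only difference is cosmetic: you justify the identity $\mathcal{L}_{\mathcal{I}}^{\mathbb{K}} = \mathcal{L}_{\varepsilon}^{\mathbb{K}}$ by a second application of Theorem \ref{th} together with Example 2.1(b), whereas the paper simply asserts it.
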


\begin{proof}{By Theorem \ref{th} we have that ${\cal M}^{\mathbb{K}} = {\cal L}_{\beta}^{\mathbb{K}}$, where
$\beta$ is a smooth tensor norm. Since $\varepsilon \leq \beta$ because $\beta$ is a tensor norm, it follows
that $ {\cal L}_{\cal I}^{\mathbb{K}} = {\cal L}_{\varepsilon}^{\mathbb{K}} \subseteq {\cal
L}_{\beta}^{\mathbb{K}} = {\cal M}^{\mathbb{K}}$.}
\end{proof}

Next we see that sometimes we can construct explicitly the smooth tensor norm
that represents an ideal of multilinear forms.

\begin{definition}\rm
\textrm{Let $p \geq1$. An $n$-linear mapping $A \in\mathcal{L}(E_{1},\ldots,
E_{n};F)$ is $p-$\textit{semi-integral}, in symbols $A \in\mathcal{L}%
_{si,p}(E_{1},\ldots,E_{n};F)$, if there exist $C\geq0$ and a regular
probability measure $\mu$ on the Borel $\sigma-$algebra of $B_{E_{1}%
^{^{\prime}}}\times\cdots\times$ $B_{E_{n}^{^{\prime}}}$ endowed with the
product of the weak star topologies $\sigma(E_{l}^{\prime},E_{l}),$
$l=1,\ldots,n,$ such that
\[
\| A(x_{1},\ldots,x_{n})\|\leq C\left(  \int_{B_{E_{1}^{\prime}}\times
\cdots\times B_{E_{n}^{\prime}}}|\varphi_{1}(x_{1})\cdots\varphi_{n}%
(x_{n})|^{p}d\mu(\varphi_{1},\ldots,\varphi_{n})\right)  ^{1/p}%
\]
for every $x_{j}\in E_{j}$, $j=1,\ldots,n$. The infimum of the constants $C$
defines a norm $\|\cdot\| _{si,p}$ on $\mathcal{L}_{si,p}(E_{1},\ldots
,E_{n};F)$. It is well known that $\mathcal{L}_{si,p}$ is a multi-ideal (see
\cite{CD}). }
\end{definition}

\begin{proposition}
\label{semi} For $p \geq1$, the ideal of $p$-semi-integral multilinear forms
$\mathcal{L}_{si,p}^{\mathbb{K}}$ is represented by a smooth tensor norm.
\end{proposition}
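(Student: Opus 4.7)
My plan is to apply Corollary \ref{cor}: it suffices to exhibit a smooth tensor norm $\beta=(\beta_n)_{n=1}^\infty$ such that $\mathcal{L}_{si,p}^{\mathbb{K}}=\mathcal{L}_\beta^{\mathbb{K}}$ isometrically, because then $\mathcal{L}_{si,p}^{\mathbb{K}}$ inherits $\beta$-representation from that corollary. The natural candidate is the polar tensor norm
\[
\beta_n(z):=\sup\{|A_L(z)|\,:\, A\in\mathcal{L}_{si,p}(E_1,\ldots,E_n;\mathbb{K}),\ \|A\|_{si,p}\leq 1\},\quad z\in E_1\otimes\cdots\otimes E_n.
\]
First I would verify that $\beta$ is a tensor norm: the reasonable crossnorm estimate $\varepsilon_n\leq\beta_n\leq\pi_n$ follows from the sandwich $\mathcal{L}_{\mathcal{I}}\subseteq\mathcal{L}_{si,p}\subseteq\mathcal{L}$ at the scalar level together with Example \ref{example}, and the metric mapping property is immediate from the ideal axioms of $\mathcal{L}_{si,p}$. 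A standard bipolar argument---whose key ingredient is the weak-$*$ closedness of the unit ball of $\mathcal{L}_{si,p}(E_1,\ldots,E_n;\mathbb{K})$ inside $\mathcal{L}(E_1,\ldots,E_n;\mathbb{K})$, itself a consequence of weak-$*$ compactness of the probability measures on $B_{E_1'}\times\cdots\times B_{E_n'}$ and the stability of the defining integral inequality under pointwise limits---then shows that $A\mapsto A_L$ is an isometric isomorphism from $\mathcal{L}_{si,p}(E_1,\ldots,E_n;\mathbb{K})$ onto $(E_1\otimes\cdots\otimes E_n,\beta_n)'$, whence $\mathcal{L}_{si,p}^{\mathbb{K}}=\mathcal{L}_\beta^{\mathbb{K}}$ isometrically.

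The heart of the argument is the smoothness of $\beta$. For this I would establish a scalar analogue of property [B] for $\mathcal{L}_{si,p}$, namely that the correspondence $A\leftrightarrow A1$ is an isometric isomorphism
\[
\mathcal{L}_{si,p}(E_1,\ldots,E_n,\mathbb{K};\mathbb{K})\longrightarrow\mathcal{L}_{si,p}(E_1,\ldots,E_n;\mathbb{K}).
\]
The inequality $\|A1\|_{si,p}\leq\|A\|_{si,p}$ is obtained by pushing a defining probability measure $\mu$ on $B_{E_1'}\times\cdots\times B_{E_n'}\times B_{\mathbb{K}'}$ forward to the first $n$ coordinates and using $|\varphi(1)|\leq 1$ for $\varphi\in B_{\mathbb{K}'}$; the reverse inequality uses the product of the defining measure on $B_{E_1'}\times\cdots\times B_{E_n'}$ with the Dirac mass at $1\in B_{\mathbb{K}'}$. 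Once this is in hand, smoothness of $\beta$ follows from the identity $A_L(z)=(A1)_L(\psi(z))$: combining it with the isometry $A\leftrightarrow A1$ gives $\beta_{n+1}(z)=\beta_n(\psi(z))$ for every $z\in E_1\otimes\cdots\otimes E_n\otimes\mathbb{K}$, which is precisely the smoothness condition.

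I expect the main obstacle to be the careful measure-theoretic verification of the scalar property [B], particularly the product-measure construction on $B_{E_1'}\times\cdots\times B_{E_n'}\times B_{\mathbb{K}'}$ (equipped with the product weak-$*$ topology) needed to match probability measures on both sides and preserve the defining integral inequality. The remaining ingredients---the tensor-norm axioms for $\beta$ and the bipolar identification with $\mathcal{L}_{si,p}^{\mathbb{K}}$---are essentially routine.
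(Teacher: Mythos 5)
Your proposal is correct, but it takes a genuinely different route from the paper's. The paper writes down an explicit \emph{primal} formula for the representing norm, namely
$\sigma_{p}^{n}(u)=\inf\Vert(\lambda_{j})_{j}\Vert_{q}\bigl(\sup_{\varphi_{l}\in B_{E_{l}'}}\sum_{j}|\varphi_{1}(x_{1,j})\cdots\varphi_{n}(x_{n,j})|^{p}\bigr)^{1/p}$
over all representations $u=\sum_{j}\lambda_{j}x_{1,j}\otimes\cdots\otimes x_{n,j}$, checks smoothness by manipulating representations directly, and proves the isometry $(E_{1}\otimes\cdots\otimes E_{n},\sigma_{p}^{n})'\cong\mathcal{L}_{si,p}(E_{1},\ldots,E_{n};\mathbb{K})$ by a H\"older-type computation combined with \cite[Theorem 1]{CD}. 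You instead define the norm by its \emph{polar} description, obtain the duality from the bipolar theorem once the $si,p$-unit ball is shown to be closed for the topology of pointwise convergence --- and your sketch of that step is sound: extract a weak-$*$ convergent subnet of the witnessing probability measures on the compact set $B_{E_{1}'}\times\cdots\times B_{E_{n}'}$ and use continuity of $(\varphi_{1},\ldots,\varphi_{n})\mapsto|\varphi_{1}(x_{1})\cdots\varphi_{n}(x_{n})|^{p}$ to pass the defining inequality to the limit, with an $\epsilon$ of slack in the constant that is harmless. Your push-forward/product-measure argument for the isometric scalar property [B] is also correct, and combined with $A_{L}=(A1)_{L}\circ\psi$ it does give $\beta_{n+1}(z)=\beta_{n}(\psi(z))$. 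By Hahn--Banach the two constructions produce the same norm, $\beta_{n}=\sigma_{p}^{n}$. What the paper's route buys is the explicit formula for the tensor norm (the stated point of that part of the section); what yours buys is a softer argument that avoids the H\"older computation and makes the source of smoothness transparent, as the exact scalar analogue of Proposition \ref{prop}, at the price of the measure-theoretic compactness step. One small remark: the cleanest way to conclude is Theorem \ref{th} rather than Corollary \ref{cor} --- the isometric identification of $\mathcal{L}_{si,p}(E_{1},\ldots,E_{n};\mathbb{K})$ with $(E_{1}\otimes\cdots\otimes E_{n},\beta_{n})'$ via linearization is precisely the condition $\mathcal{L}_{si,p}^{\mathbb{K}}\stackrel{1}{=}\mathcal{L}_{\beta}^{\mathbb{K}}$ appearing there, so the representation follows immediately from smoothness of $\beta$.
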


\begin{proof} Given $u\in
E_{1}\otimes\cdots\otimes E_{n}$, define
\[
\sigma_{p}^{n}(u):=\inf\Vert(\lambda_{j})_{j=1}^{m}\Vert_{q}\left(
\sup_{\varphi_{l}\in B_{E_{l}^{^{\prime}}}}\sum_{j=1}^{m}|\varphi_{1}%
(x_{1,j})\cdots\varphi_{n}(x_{n,j})|^{p}\right)  ^{1/p}%
\]
where the infimum is taken over all representations of the form $u=\sum
\limits_{j=1}^{m}\lambda_{j}x_{1,j}\otimes\cdots\otimes x_{n,j}$, where
$m\in\mathbb{N}$, $x_{l,j}\in E_{l}$, $l=1,\ldots,n$, $\lambda_{j}%
\in\mathbb{K}$, $j=1,\ldots,m$, and $\frac{1}{p}+\frac{1}{q}=1$. Standard
techniques show that $\sigma_{p}=(\sigma_{p}^{n})_{n=1}^{\infty}$ is a tensor
norm (hardwork!). Let us show that $\sigma_{p}$ is smooth. Given $u=\sum
_{j=1}^{m}\lambda_{j}x_{1,j}\otimes\cdots\otimes x_{n,j}\otimes b_{j}$ in
$E_{1}\otimes\cdots\otimes E_{n}\otimes\mathbb{K} $, we have
\begin{align*}
({\sigma}_{p}^{n+1}(u))^{p}  & \leq\Vert(\lambda_{j})_{j=1}^{m}\Vert_{q}%
^{p}\sup_{\varphi_{l}\in B_{E_{l}^{^{\prime}}},\varphi\in B_{\mathbb{K}%
^{^{\prime}}}}\displaystyle\sum_{j=1}^{m}\mid\varphi_{1}(x_{1,j})\cdots
\varphi_{n}(x_{n,j})\varphi(b_{j})\mid^{p}\\
& =\Vert(\lambda_{j})_{j=1}^{m}\Vert_{q}^{p}\sup_{\varphi_{l}\in
B_{E_{l}^{^{\prime}}},\varphi\in B_{\mathbb{K}^{^{\prime}}}}\displaystyle\sum
\limits_{j=1}^{m}\mid\varphi_{1}(b_{j}x_{1,j})\cdots\varphi_{n}(x_{n,j}%
)\varphi(1)\mid^{p}\\
& \leq\Vert(\lambda_{j})_{j=1}^{m}\Vert_{q}^{p}\sup_{\varphi_{l}\in
B_{E_{l}^{^{\prime}}},\varphi\in B_{\mathbb{K}^{^{\prime}}}}\displaystyle\sum
\limits_{j=1}^{m}\mid\varphi_{1}(b_{j}x_{1,j})\cdots\varphi_{n}(x_{n,j}%
)\mid^{p}\Vert\varphi\Vert^{p}\mid1\mid^{p}\\
& \leq\Vert(\lambda_{j})_{j=1}^{m}\Vert_{q}^{p}\sup_{\varphi_{l}\in
B_{E_{l}^{^{\prime}}}}\displaystyle\sum\limits_{j=1}^{m}\mid\varphi_{1}%
(b_{j}x_{1,j})\cdots\varphi_{n}(x_{n,j})\mid^{p}.
\end{align*}
Since $\sum_{j=1}^{m}\lambda_{j}b_{j}x_{1,j}\otimes\cdots\otimes x_{n,j}$ is a
representation of $\psi(u)\in E_{1}\otimes\cdots\otimes E_{n}$, it follows
that ${\sigma}_{p}^{n+1}(u)\leq{\sigma}_{p}^{n}(\psi(u))$. A similar
computation shows that ${\sigma}_{p}^{n}(\psi(u))\leq{\sigma}_{p}^{n+1}(u)$,
completing the proof of the smoothness of $\sigma_{p}$.\\
\indent The proof that $\mathcal{L}_{si,p}^{\mathbb{K}}$ is $\sigma_{p}$-represented is a combination of the arguments of the proofs of \cite[Theorem 4.8]{AlencarMatos} and \cite[Theorem
1]{CD}. Let $E_1, \ldots, E_n$ and $f \in(E_{1}\otimes\cdots\otimes E_{n},\sigma_{p}^n)^{\prime}$ be given.
Consider the $n$-linear form
$A$ on $E_{1}\times\cdots\times E_{n}$ given by $A(x_{1}%
,\ldots,x_{n})=f(x_{1}\otimes\cdots\otimes x_{n})$. For appropriate $\lambda_{j}\in\mathbb{K}$ with
$|\lambda_{j}|=1$,$j=1,\ldots,m$, we have that
$$ \sum_{j=1}^{m}|
A(x_{1,j},\ldots,x_{n,j})|^{p}= \sum_{j=1}^{m}\left| | f(x_{1,j}\otimes\cdots\otimes
x_{n,j})|^{p-1}f(x_{1,j}\otimes \cdots\otimes x_{n,j})\right| \hspace*{30em}$$
$$ =\left|
\sum_{j=1}^{m}|f(x_{1,j}\otimes\cdots\otimes x_{n,j})|^{p-1}\lambda_{j}f(x_{1,j}\otimes\cdots\otimes
x_{n,j})\right| \hspace*{30em} $$
$$=\left|f\left(  \sum_{j=1}%
^{m}\lambda_{j}|f(x_{1,j}\otimes\cdots\otimes x_{n,j})|^{p-1}%
x_{1,j}\otimes\cdots\otimes x_{n,j}\right)  \right|\hspace*{30em}$$
$$ \leq\|f\|_{(E_1\otimes\cdots\otimes
E_n,\sigma_p^n)'}\sigma_{p}^n \left(  \sum _{j=1}^{m}\lambda_{j}|f(x_{1,j}\otimes\cdots\otimes x_{n,j})|
^{p-1}x_{1,j}\otimes\cdots\otimes x_{n,j}\right)\hspace*{30em}$$
$$ \leq\|f\|_{(E_1\otimes\cdots\otimes
E_n,\sigma_p^n)'}\left\|  \left(  \lambda_{j}|
f(x_{1,j}\otimes\cdots\otimes x_{n,j})|^{p-1}\right)  _{j=1}%
^{m}\right\|  _{q}\left( \sup_{\varphi_{l}\in B_{E_{l}^{^{\prime}}}}\sum\limits_{j=1}^{m}|\varphi_{1}
(x_{1,j})\cdots\varphi_{n} (x_{n,j})|^{p}\right)  ^{1/p}\hspace*{30em}$$
$$
=\displaystyle\|f\|_{(E_1\otimes\cdots\otimes E_n,\sigma_p^n)'}\left(  \sum_{j=1}%
^{m}|f(x_{1,j}\otimes\cdots\otimes x_{n,j})|^{p}\right) ^{1/q}\left(  \sup_{\varphi_{l}\in
B_{E_{l}^{^{\prime}}}}\sum\limits_{j=1}^{m}|\varphi_{1} (x_{1,j}%
)\cdots\varphi_{n} (x_{n,j})|^{p}\right)  ^{1/p}.\hspace*{30em}$$
Hence we have that
$$\left(  \sum_{j=1}^{m}| A(x_{1,j},\ldots,x_{n,j})|^{p}\right) ^{1/p}\leq\|f \|_{(E_1\otimes\cdots\otimes E_n,
\sigma_p^n)'}\left( \sup_{\varphi_{l}\in
B_{E_{l}^{^{\prime}}}}\sum\limits_{j=1}^{m}|\varphi_{1}
(x_{1,j})\cdots\varphi_{n} (x_{n,j})|^{p}\right) ^{1/p},$$ which
shows that $A\in \mathcal{L}_{si,p}(E_{1},\ldots,E_{n})$ and $$\|
A\|_{si,p}\leq\|f\|_{(E_1\otimes\cdots\otimes E_n,\sigma_p^n)'} {=
\|\Phi(A)\|_{(E_1\otimes\cdots\otimes E_n,\sigma_p^n)'}}.$${ In
particular $\Phi$ is surjective. To show the reverse inequality, take
$A\in {\mathcal L}_{si,p}(E_{1},\ldots,E_{n};\mathbb{K})$ and
let $u = \sum_{j=1}^{m}\lambda_{j}x_{1,j}\otimes\cdots\otimes
x_{n,j} \in E_{1}\otimes\cdots\otimes E_{n}$, where
$\lambda_{j}\in\mathbb{K}$, $x_{l,j}\in E_{l}$, $l=1,\ldots,n$,
$j=1,\ldots,m$. From \cite[Theorem 1]{CD} it follows that
\begin{eqnarray*}|\Phi(A)(u)|^{p}&=&\left|
\sum_{j=1}^{m}\lambda_{j}A(x_{1,j},\ldots,x_{n,j})\right|  ^{p}\leq \|(\lambda_{j})_{j=1}^{m}\|^{p}_{q}
\sum\limits_{j=1}^{m}| A(x_{1,j},\ldots,x_{n,j})|^{p}\\
&\leq&\| (\lambda_{j})_{j=1}^{m}\|^{p}_{q}\| A\|^{p}_{si,p} \sup_{\varphi_{l}\in
B_{E_{l}^{^{\prime}}}}\sum\limits_{j=1}^{m}|\varphi_{1} (x_{1,j})\cdots\varphi_{n} (x_{n,j}%
)|^{p},
\end{eqnarray*}
so $|\Phi(A)(u)| \leq\| A\|_{si,p}\cdot\sigma_{p}(u)$. Since $u$ is arbitrary it follows that $\|\Phi(A)
\|_{(E_1\otimes\cdots\otimes E_n,\sigma_p)'}\leq\| A\|_{si,p}$.}
\end{proof}

Summing up the information:

\medskip

\noindent (i) The multi-ideal ${\cal L}_{\varepsilon}$ is not represented by a
smooth tensor norm (Proposition \ref{propos}), whereas ${\cal L}_{\varepsilon}^\mathbb{K}$ is represented by the smooth tensor norm $\varepsilon$ (Example \ref{scalar}),\\
(ii) The ideal $\mathcal{L}_{sm,p}$ of strongly multiple $p$-summing multilinear mappings is represented by a tensor norm but not by a smooth
tensor norm (Proposition \ref{not}) and, similarly to what we did in Proposition \ref{semi}, it can be proved that $\mathcal{L}_{sm,p}%
^{\mathbb{K}}$ is represented by a smooth tensor norm;\\
(iii) The ideal $\mathcal{L}_{si,p}^{\mathbb{K}}$ of $p$-semi-integral multilinear forms is represented by a smooth tensor norm (Proposition \ref{semi}), the multi-ideal $\mathcal{L}_{si,p}$ of $p$-semi-integral multilinear mappings is represented by a tensor norm (\cite[Proposition 2]{note}) but not by a smooth tensor norm (hardwork!),

\medskip

\noindent it seems that the typical behavior of a maximal multi-ideal ${\cal M} \neq {\cal L}$ is that ${\cal M}$ is not represented by a smooth tensor norm whereas ${\cal M}^\mathbb{K}$ is represented by a smooth tensor norm.

We finish the paper showing how the representation of the ideal of multilinear functionals ${\cal M}^{\mathbb{K}}$ by
a smooth tensor norm provides information about the representation of certain vector-valued components of ${\cal M}$. Given Banach spaces
$E_1, \ldots, E_n,F$ and a tensor norm $\beta=(\beta_n)_{n=1}^\infty$, we shall say that ${\cal M}(E_1, \ldots, E_n;F')$ is $\beta$-represented if
$\mathcal{M}(E_{1}, \ldots, E_{n};F^{\prime})$ and $(E_{1} \otimes\cdots\otimes E_{n}\otimes F, \beta_{n+1})^{\prime}$ are isometrically isomorphic via the canonical mapping of Definition \ref{definition}.

\begin{proposition}\label{propo} Let $\mathcal{M}$ be a multi-ideal such that $\mathcal{M}^{\mathbb{K}}$ is represented by a smooth tensor norm $\beta$. Then ${\cal M}(E_1, \ldots, E_n;F')$ is $\beta$-represented whenever the spaces $\mathcal{M}(E_1,\ldots,E_n;F')$ and $\mathcal{M}(E_1,\ldots,E_n,F;{\mathbb{K}})$ are canonically isometric isomophic. 
\end{proposition}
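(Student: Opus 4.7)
The plan is to realize the canonical map $\varphi$ of Definition \ref{definition} as a composition of four canonical isometric isomorphisms, in the same spirit as the proof of Theorem \ref{th}. Three of the four pieces will come directly from the hypotheses and the fourth from the smoothness of $\beta$; the only substantive verification left at the end is a bookkeeping check on elementary tensors.

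Fix $n$ and Banach spaces $E_{1},\ldots,E_{n},F$. I would build the chain
\[
\mathcal{M}(E_{1},\ldots,E_{n};F')
\stackrel{\Theta_{1}}{\longrightarrow}
\mathcal{M}(E_{1},\ldots,E_{n},F;\mathbb{K})
\stackrel{\Theta_{2}}{\longrightarrow}
\mathcal{M}(E_{1},\ldots,E_{n},F;\mathbb{K}')
\]
\[
\stackrel{\Theta_{3}}{\longrightarrow}
(E_{1}\otimes\cdots\otimes E_{n}\otimes F\otimes\mathbb{K},\beta_{n+2})'
\stackrel{\Theta_{4}}{\longrightarrow}
(E_{1}\otimes\cdots\otimes E_{n}\otimes F,\beta_{n+1})'.
\]
Here $\Theta_{1}$ is the hypothesized canonical isometric isomorphism, $T\mapsto\widetilde{T}$ with $\widetilde{T}(x_{1},\ldots,x_{n},y)=T(x_{1},\ldots,x_{n})(y)$. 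The map $\Theta_{2}$ is post-composition with $h^{-1}$, where $h\colon\mathbb{K}'\to\mathbb{K}$ is the canonical isometry $h(f)=f(1)$; the ideal inequality (iii') forces $\Theta_{2}$ to be an isometric isomorphism onto the corresponding component of $\mathcal{M}$. The map $\Theta_{3}$ is the canonical operator of Definition \ref{definition} applied to the $(n+1)$-tuple $(E_{1},\ldots,E_{n},F)$ with scalar-valued target, which is an isometric isomorphism by the hypothesis that $\mathcal{M}^{\mathbb{K}}$ is $\beta$-represented. Finally, since $\beta$ is smooth, Definition \ref{def} gives that $\psi$ is an isometric isomorphism, so I take $\Theta_{4}:=(\psi^{-1})^{\ast}$.

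The remaining step is to verify that the composition $\Theta_{4}\circ\Theta_{3}\circ\Theta_{2}\circ\Theta_{1}$ is precisely $\varphi$. This is routine: for $T\in\mathcal{M}(E_{1},\ldots,E_{n};F')$ one finds
\[
\Theta_{2}\Theta_{1}(T)(x_{1},\ldots,x_{n},y)(\lambda)=\lambda\,T(x_{1},\ldots,x_{n})(y),
\]
whence $\Theta_{3}\Theta_{2}\Theta_{1}(T)$ sends an elementary tensor $x_{1}\otimes\cdots\otimes x_{n}\otimes y\otimes\lambda$ to $\lambda\,T(x_{1},\ldots,x_{n})(y)$. Since $\psi^{-1}(x_{1}\otimes\cdots\otimes x_{n}\otimes y)=x_{1}\otimes\cdots\otimes x_{n}\otimes y\otimes 1$, applying $\Theta_{4}$ yields the functional $x_{1}\otimes\cdots\otimes x_{n}\otimes y\mapsto T(x_{1},\ldots,x_{n})(y)$, which is exactly $\varphi(T)$.

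No real obstacle is anticipated: each individual arrow in the chain is produced by a single hypothesis or by the smoothness of $\beta$, and the only delicate point is that one must chase through elementary tensors rather than merely cite Theorem \ref{theorem}, since that uniqueness result is about multi-ideals and here the conclusion concerns a single vector-valued component $\mathcal{M}(E_{1},\ldots,E_{n};F')$.
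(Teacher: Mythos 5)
Your proposal is correct and follows essentially the same route as the paper: the identical four-step chain through $\mathcal{M}(E_1,\ldots,E_n,F;\mathbb{K})$, $\mathcal{M}(E_1,\ldots,E_n,F;\mathbb{K}')$, and $(E_1\otimes\cdots\otimes E_n\otimes F\otimes\mathbb{K},\beta_{n+2})'$, using the hypothesis, the isometry $\mathbb{K}\cong\mathbb{K}'$, the $\beta$-representation of $\mathcal{M}^{\mathbb{K}}$, and smoothness, in that order. The only difference is that you carry out explicitly the tensor-chasing that the paper dismisses as ``routine computations,'' which is a welcome addition rather than a deviation.
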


\begin{proof} Consider the following chain of canonical mappings:
\begin{eqnarray*}\mathcal{M}(E_1,\ldots,E_n;F')& \stackrel{\rm I}= &\mathcal{M}(E_1,\ldots,E_n,F;\mathbb{K})\\
& \stackrel{\rm II}= & \mathcal{M}(E_1,\ldots,E_n,F;\mathbb{K}')\\
& \stackrel{\rm III}= & (E_1\otimes\cdots \otimes E_n\otimes F\otimes\mathbb{K}, \beta_{n+2})' \\
& \stackrel{\rm IV}= &(E_1\otimes\cdots \otimes E_n\otimes F, \beta_{n+1})'.
\end{eqnarray*}
I is an isometric isomorphism by assumption, II because the canonical mapping
$$h \colon \mathbb{K} \longrightarrow \mathbb{K}'~,~h(\lambda)(\alpha) = \lambda \cdot \alpha $$
is an isometric isomorphism, III because $\mathcal{M}^{\mathbb{K}}$ is $\beta$-represented and IV because the tensor norm $\beta$ is smooth. Routine computations show that the composition of all these mappings ends up in the canonical mapping between $\mathcal{M}(E_1,\ldots,E_n;F')$ and $(E_1\otimes\cdots \otimes E_n\otimes F, \beta_{n+1})'$.
\end{proof}

\vspace*{1em} \noindent[Geraldo Botelho] Faculdade de Matem\'atica,
Universidade Federal de Uberl\^andia, 38.400-902 - Uberl\^andia, Brazil,
e-mail: botelho@ufu.br.

\medskip

\noindent[Erhan \c Caliskan] Y\i ld\i z Tekn\'{\i}k \"{U}n\'{\i}vers\'{\i}tes\'{\i},
Fen-Edeb\'{\i}yat Fak\"{u}ltes\'{\i}, Matemat\'{\i}k B\"{o}l\"{u}m\"{u},
Davutpa\c sa Kamp\"{u}s\"{u}, 34210 Esenler, \.{I}stanbul, T\"{u}rk\'{\i}ye, e-mail: \newline caliskan@yildiz.edu.tr.

\medskip

\noindent\lbrack Daniel Pellegrino] Departamento de Matem\'{a}tica,
Universidade Federal da Para\'{\i}ba, 58.051-900 - Jo\~{a}o Pessoa, Brazil,
e-mail: dmpellegrino@gmail.com.

\end{document}